\documentclass[12pt,reqno]{amsart}
\usepackage{graphicx}
\usepackage{amsmath}
\usepackage{amssymb}
\usepackage{amscd}
\usepackage{mathrsfs}
\usepackage[all,cmtip]{xy}
 \usepackage{color}
\textheight=590pt

\newtheorem{theorem}{Theorem}

\newtheorem{theoremc}{Theorem}

\newtheorem{rk}[theoremc]{Remark\!}

\newtheorem{lem}[theorem]{Lemma}

\renewcommand\1{{\bf 1}}

\newcommand\com[1]{}

\newcommand\Cc{{\let\mathcal\mathscr\mathcal C}}

\newcommand\D{{\mathcal D}}

\newcommand\g{{\frak g}}

\newcommand\La{\Lambda}

\newcommand\op[1]{\mathop{\rm #1}\nolimits}
\newcommand\ot{\otimes}
\newcommand\p{\partial}

\newcommand\R{{\mathbb R}}

\newcommand\vp{\varphi}
\newcommand\we{\wedge}

\newcommand{\sym}{\mathfrak{g}}

\begin{document}

 \title[Differential Invariants of self-dual structures]{Differential Invariants of \\ Self-Dual conformal structures}
 \author{Boris Kruglikov, Eivind Schneider}
 \date{}
\address{\hspace{-17pt} Institute of Mathematics and Statistics, NT-faculty, University of Troms\o, Troms\o\ 90-37, Norway.\newline
E-mails: {\tt boris.kruglikov@uit.no, eivind.schneider@uit.no}. }
 \keywords{Differential Invariants, Invariant Derivations, Self-Duality, Conformal metric Structure,
 Hilbert polynomial, Poincar\'e function}

 \vspace{-14.5pt}
 \begin{abstract}
We compute the quotient of the self-duality equation for conformal metrics by the action of the
diffeomorphism group. We also determine Hilbert polynomial, counting the number of independent scalar
differential invariants depending on the jet-order, and the corresponding Poincar\'e function.
We describe the field of rational differential invariants separating generic orbits of the diffeomorphism
pseudogroup action, resolving the local recognition problem for self-dual conformal structures.
 \end{abstract}

 \maketitle

%%%%%%%%%%%%%%%%%%%%%%%%%%%%%%%%%%%%%%%%%%%%%%%%%%%%%%%%%%%%%%%%%%%%%%%%%%%%
%0%
 \section*{Introduction}

Self-duality is an important phenomenon in four-dimensional differential geometry that has
numerous applications in physics, twistor theory, analysis, topology and integrability theory.
A pseudo-Riemannian metric $g$ on an oriented four-dimensional manifold $M$ determines the Hodge operator
$*:\La^2TM\to\La^2TM$ that satisfies the property $*^2=\1$ provided $g$ has the Riemannian or split signature.
In this paper we restrict to these two cases, ignoring the Lorentzian signature.

The Riemann curvature tensor splits into $O(g)$-irreducible pieces $R_g=\op{Sc}_g+\op{Ric}_0+W$,
where the last part is the Weyl tensor \cite{B} and
$O(g)$ is the orthogonal group of $g$. In dimension 4, due to exceptional isomorphisms
$\mathfrak{so}(4)=\mathfrak{so}(3)\oplus\mathfrak{so}(3)$,
$\mathfrak{so}(2,2)=\mathfrak{so}(1,2)\oplus\mathfrak{so}(1,2)$,
the last component splits further $W=W_++W_-$,
where $*W_\pm=\pm W_\pm$. Metric $g$ is called self-dual if $*W=W$, i.e.\ $W_-=0$.
This property does not depend on conformal rescalings of the metric $g\to e^{2\vp}g$,
and so is the property of the conformal structure $[g]$.

Since the space of $W_-$ has dimension 5, and the conformal structure has 9 components in 4D,
the self-duality equation appears as an underdetermined system of 5 PDE on 9 functions of 4 arguments.
This is however a misleading count, since the equation is natural, and the diffeomorphism group
acts as the symmetry group of the equation. Since $\op{Diff}(M)$ is parametrized by 4 functions of
4 arguments, we expect to obtain a system of 5 PDE on $5=9-4$ functions of 4 arguments.

This $5\times 5$ system is determined, % better: gauge determined
but it has never been written explicitly. There are two approaches to eliminate the gauge freedom.

One way to fix the gauge is to pass to the quotient equation that is obtained
as a system of differential relations (syzygies) on a generating set of differential invariants.
By computing the latter for the self-dual conformal structures we write the quotient equation
as a nonlinear $9\times9$ PDE system, which is determined but complicated to investigate.

Another approach is to get a cross-section or a quasi-section to the orbits of the pseudogroup
$G=\op{Diff}_\text{loc}(M)$ action on the space $\mathcal{SD}=\{[g]:W_-=0\}$ of self-dual conformal
metric structures. This was essentially done in the recent work \cite[III.A]{DFK}:
By choosing a convenient ansatz the authors of that work encoded all self-dual structures via a
$3\times3$ PDE system $\mathcal{SDE}$ of the second order
(this works for the neutral signature; in the Riemannian case use doubly biorthogonal
coordinates to get self-duality as a $5\times5$ second-order PDE system \cite[III.C]{DFK}
that can be investigated in a similar manner as the $3\times3$ system).

In this way almost all gauge freedom was eliminated, yet a part of symmetry remained shuffling the structures.
This pseudogroup $\mathcal{G}$ is parametrized by 5 functions of 2 arguments (and so is considerably smaller
than $G$). We fix this freedom by computing the differential invariants of $\mathcal{G}$-action on $\mathcal{SDE}$
and passing to the quotient equation.

The differential invariants are considered in rational-polynomial form, as in \cite{KL2}.
This allows to describe the algebra of invariants in Lie-Tresse approach, and also
using the principle of $n$-invariants of \cite{ALV}. We count differential invariants
in both approaches and organize the obtained numbers in the Hilbert polynomial and the Poincar\'e function.

%%%%%%%%%%%%%%%%%%%%%%%%%%%%%%%%%%%%%%%%%%%%%%%%%%%%%%%%%%%%%%%%%%%%%%%%%%%%
%1%
 \section{Scalar invariants of self-dual structures}\label{S2}

The first approach to compute the quotient of the self-duality equation by the local diffeomorphisms
pseudogroup $G$ action is via differential invariants of self-dual structures $\mathcal{SD}$.
The signature of the metric $g$ or conformal metric structure $[g]$ is either $(2,2)$ or $(4,0)$.
In this and the following two sections we assume that $g$ is a Riemannian metric on $M$ for convenience.
Consideration of the case $(2,2)$ is analogous.

To distinguish between metrics and conformal structures we will write $\mathcal{SD}_m$
for the former and $\mathcal{SD}_c$ for the latter.
Denote the space of $k$-jets of such structures by $\mathcal{SD}^k_m$ and $\mathcal{SD}^k_c$ respectively.
These clearly form a tower of bundles over $M$ with projections
$\pi_{k,l}:\mathcal{SD}^k_{\rm x}\to \mathcal{SD}^l_{\rm x}$, $\pi_k:\mathcal{SD}^k_{\rm x}\to M$,
where ${\rm x}$ is either $m$ or $c$.

 \subsection{Self-dual metrics: invariants}

Consider the bundle $S^2_+T^*M$ of positively definite quadratic forms on $TM$ and its space of jets
$J^k(S^2_+T^*M)$. The equation $W_-=0$ in 2-jets determines the submanifold $\mathcal{SD}^2_m\subset J^2$,
and its prolongations are $\mathcal{SD}^k_m\subset J^k$ for $k>2$.

Computation of the stabilizer of the action shows that the submanifolds $\mathcal{SD}^k_m$ are regular,
meaning that generic orbits of the $G$-action in $\mathcal{SD}^k_m$ have the same dimension as in $J^k(S^2_+T^*M)$.
This is based on a simple observation that generic self-dual metrics have no symmetry at all.
Thus the differential invariants of the action on $\mathcal{SD}^k_m$ can be obtained from
the differential invariants on the jet space $J^k$ \cite{K1,LY}.

These invariants can be constructed as follows. There are no invariants of order $\leq1$ due to
existence of geodesic coordinates, the first invariants arise in order $2$ and they are derived from the
Riemann curvature tensor (as this is the only invariant of the 2-jet of $g$). Traces of the Ricci tensor
$\op{Tr}(\op{Ric}^i)$, $1\le i\le4$, yield 4 invariants $I_1,\dots,I_4$ that in a Zariski open set of jets of
metrics can be considered horizontally independent, meaning $\hat dI_1\we\dots\we\hat dI_4\neq0$.

To get other invariants of order 2, choose an eigenbasis $e_1\,\dots,e_4$ of the Ricci operator
(in a Zariski open set it is simple), denote the dual coframe by $\{\theta^i\}$ and decompose
$R_g=R^i_{jkl}e_i\ot\theta^j\ot\theta^k\we\theta^l$. These invariants include the previous $I_i$,
and the totality of independent second-order invariants for self-dual metrics is
 $$
\dim\{R_g|W_-=0\}-\dim O(g)=(20-5)-6=9.
 $$
The invariants $R^i_{jkl}$ are however not algebraic, but obtained as algebraic extensions via the characteristic
equation. Then $R^i_{jkl}$ (9 independent components) and $e_i$ generate the algebra of invariants.

Alternatively, compute the basis of Tresse derivatives $\nabla_i=\hat{\p}_{I_i}$ and express
the metric in the dual coframe $\omega^j=\hat dI_j$: $g=G_{ij}\omega^i \omega^j$.
Then the functions $I_i,G_{kl}$ generate the space of invariants by the principle of $n$-invariants \cite{ALV}.

 \begin{rk}
There is a natural almost complex structure $\hat J$ on the twistor space of self-dual $(M,g)$,
i.e.\ on the bundle $\hat M$ over $M$ whose fiber at $a$ consists of the sphere of orthogonal complex
structures on $T_aM$ inducing the given orientation.
The celebrated theorem of Penrose \cite{P,B} states that self-duality is equivalent to integrability of $\hat J$.
Thus local differential invariants of $g$ can be expressed through semi-global invariants of
the foliation of the three-dimensional complex space $\hat M$ by rational curves.
Similarly in the split signature one gets foliation by $\alpha$-surfaces, and the geometry of this
foliation of $\hat M$ yields the invariants on $M$.
 \end{rk}

We explain how to get rid of non-algebraicity in the next subsection.

 \subsection{Self-dual conformal structures: invariants}

Here the invariants of the second order are obtained from the Weyl tensor as the only conformally invariant part of
the Riemann tensor $R_g$. For general conformal structures a description of the scalar invariants was given
recently in \cite{K2}. In our case $W=W_++W_-$ the second component vanishes, and so we have only
5-dimensional space of curvature tensors $\mathcal{W}$, namely Weyl parts of $R_g$ considered as $(3,1)$ tensors.

Let us fix a representative of the conformal structure $g_0\in[g]$ by the requirement $\|W_+\|^2_{g_0}=1$,
this uniquely determines $g_0$ provided that $W_+$ is non-vanishing in a neighborhood
(in the case of neutral signature we have to require $\|W_+\|^2_{g}\neq0$ for some and hence any metric $g\in[g]$
and then we can fix $g_0$ up to $\pm$ by the requirement $\|W_+\|^2_{g_0}=\pm1$).
Use this representative to convert $W_+$ into a $(2,2)$-tensor, considered as a map
$W_+:\La^2T\to\La^2T$, where $T=T_aM$ for a fixed $a\in M$.

Recall \cite{B} that the operator $W=W_++W_-$ is block-diagonal in terms of
the Hodge $*$-decomposition $\La^2T=\La^2_+T\oplus\La^2_-T$. Thus $W_+:\La^2_+T\to\La^2_+T$ is a map of 3-dimensional spaces and it is traceless of norm 1. For the spectrum $\op{Sp}(W_+)=\{\lambda_1,\lambda_2,\lambda_3\}$
this means $\sum\lambda_i=0$, $\max|\lambda_i|=1$. To conclude, we have only one scalar invariant of order 2,
for which we can take $I=\op{Tr}(W_+^2)$.

To obtain more differential invariants we proceed as follows. It is known that Riemannian conformal structure
in 4D is equivalent to a quaternionic structure (split-quaternionic in the split-signature).
In the domain, where $\op{Sp}(W_+|\La^2_+)$ is simple we even get a hyper-Hermitian structure
(on the bundle $TM$ pulled back to $\mathcal{SD}^2_c$, so no integrability conditions for the operators $J_1,J_2,J_3$)
as follows.

Let $\sigma_i\in\La^2_+$ be the eigenbasis of $W_+$ corresponding to eigenvalues $\lambda_i$,
normalized by $\|\sigma_i\|^2_{g_0}=1$ (this still leaves $\pm$ freedom for every $\sigma_i$).
These 2-forms are symplectic (= nondegenerate, since again these are forms on a bundle over $\mathcal{SD}^2_c$)
and $g_0$-orthogonal, so the operators $J_i=g_0^{-1}\sigma_i$ are
anti-commuting complex operators on the space $T$, and they are in quaternionic relations up to the sign.
We can fix one sign by requiring $J_3=J_1J_2$, but still have residual freedom $\mathbb{Z}_2\times\mathbb{Z}_2$.

Now we can fix a canonical (up to above residual symmetry) frame, depending on the 3-jet of $[g]$, as follows:
$e_1=g_0^{-1} \hat dI/\|g_0^{-1} \hat dI\|_{g_0}$, $e_2=J_1e_1$, $e_3=J_2e_1$, $e_4=J_3e_1$. The structure functions of this frame $c^k_{ij}$ (given by $[e_i,e_j]=c^k_{ij}e_k$) together with $I$ constitute the fundamental invariants
of the conformal structure (we can fix, for instance, $I_1=I$, $I_2=c^1_{12}$, $I_3=c^1_{13}$, $I_4=c^1_{14}$
to be the basic invariants), and together with the invariant derivations $\nabla_j=\mathcal{D}_{e_j}$ 
(total derivative along $e_j$) they generate the algebra of scalar differential invariants micro-locally.

The micro-locality comes from non-algebraicity of the invariants. Indeed, since we used eigenvalues and
eigenvectors in the construction, the output depends on an algebraic extension via some additional variables
$y$. Notice though that this involves only 2-jet coordinates, i.e.\ the $y$-variables are in algebraic
relations with the fiber variables of the projection $J^2\to J^1$, and with respect to higher jets everything
is algebraic. Thus we can eliminate the $y$-variables, as well as the residual freedom, and obtain the
algebra of global rational invariants $\mathfrak{A}_l$.

Here $l$ is the order of jet from which only polynomial behavior of the invariants can be assumed \cite{KL2}.
This yields the Lie-Tresse type description of the algebra $\mathfrak{A}_l$.

It is easy to see that the rational expressions occur at most on the level of 3-jets, so the generators
of the rational algebra can be chosen polynomial in the jets of order $>3$.
Thus we conclude:

 \begin{theorem}
The algebra $\mathfrak{A}_3$ of rational-polynomial invariants as well as the field
$\mathfrak{F}$ of rational differential invariants of self-dual conformal metric structures
are both generated by a finite number of (the indicated) differential invariants $I_i$
and invariant derivations $\nabla_j$, and the invariants from this
algebra/field separate generic orbits in $\mathcal{SD}_c^\infty$.
 \end{theorem}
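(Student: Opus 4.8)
The plan is to deduce the statement from the global Lie--Tresse theorem of \cite{KL2}, which asserts that for an algebraic pseudogroup action the algebra of rational-polynomial differential invariants is generated, in the Lie--Tresse sense, by finitely many invariants together with finitely many invariant derivations. The task then splits into three parts: verifying the algebraicity hypothesis, identifying the concrete generators as those indicated above, and establishing separation of generic orbits.

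First I would address algebraicity. The construction of the frame $e_1,\dots,e_4$ and of the forms $\sigma_i$ relies on diagonalizing $W_+$, hence on an algebraic extension encoded by the auxiliary variables $y$ satisfying the characteristic equation. As observed above, these variables are functions of the $2$-jet only, being in algebraic relation with the fibre coordinates of $J^2\to J^1$; with respect to all higher jets the dependence is rational, and in fact polynomial for jets of order exceeding $3$. Eliminating $y$ together with the residual $\Z_2\times\Z_2$ ambiguity yields a genuinely algebraic action on a Zariski-open subset of $\mathcal{SD}^k_c$, to which the hypotheses of \cite{KL2} apply. This is also what fixes the value $l=3$: below this order rational expressions are unavoidable, but the generators can be taken polynomial in the jets of order $>3$.

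Next I would exhibit the generators explicitly. By the principle of $n$-invariants \cite{ALV}, the four horizontally independent invariants $I_1=I,\ I_2=c^1_{12},\ I_3=c^1_{13},\ I_4=c^1_{14}$ play the role of invariant coordinates on the base $M$ (here $n=\dim M=4$), and the dual Tresse derivations coincide, up to an invertible invariant matrix, with $\nabla_j=\mathcal{D}_{e_j}$; applying these derivations to the remaining structure functions $c^k_{ij}$ then reproduces every differential invariant of higher order. The main point to verify is that no new functionally independent invariant appears at each prolongation beyond the derivatives of the $c^k_{ij}$ already obtained, equivalently that the codimension of the generic $G$-orbit in $\mathcal{SD}^k_c$ grows exactly by the number of independent $k$-th order derivatives of the generators. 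This follows from the regularity of the jet-spaces $\mathcal{SD}^k_c$, paralleling the metric case treated above, together with the stabilization of orbit dimensions, and it is the step where most care is required.

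Finally, separation of generic orbits in $\mathcal{SD}_c^\infty$ is a consequence of the triviality of the stabilizer: a generic self-dual conformal structure has no infinitesimal symmetry, so the $G$-orbits attain the maximal dimension and their codimension equals the number of independent invariants at each level. Hence on a Zariski-open set the common level sets of the rational invariants from $\mathfrak{A}_3$, equivalently from $\mathfrak{F}$, are precisely the orbits, which is the asserted solution of the local recognition problem. The hard part, as indicated, is the orbit-codimension count ensuring that the finitely many indicated generators close up under the invariant derivations; the algebraicity reduction and the separation statement are then comparatively formal.
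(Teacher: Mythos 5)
Your proposal follows essentially the same route as the paper: the theorem is justified there by the preceding construction (the invariant $I$, the canonical frame $e_i$ with structure functions $c^k_{ij}$, the principle of $n$-invariants, elimination of the algebraic-extension variables $y$ and the residual $\mathbb{Z}_2\times\mathbb{Z}_2$ freedom to pass from micro-local to global rational invariants with $l=3$), combined with the global Lie--Tresse theorem of \cite{KL2} and the stabilizer triviality of Section \ref{S3}. The only minor point is that separation of generic orbits should be attributed to the Lie--Tresse/Rosenlicht machinery for algebraic actions rather than to the orbit-dimension count alone, but since you invoke \cite{KL2} at the outset this is covered.
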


A similar statement also holds true for metric invariants of $\mathcal{SD}_m^\infty$.

%%%%%%%%%%%%%%%%%%%%%%%%%%%%%%%%%%%%%%%%%%%%%%%%%%%%%%%%%%%%%%%%%%%%%%%%%%%%
%3%
 \section{Stabilizers of generic jets}\label{S3}

Our method to compute the number of independent differential invariants of order $k$ follows the approach
of \cite{LY}. We will use the jet-language from the formal theory of PDE, and refer the reader to \cite{KL1}.

Fix a point $a\in M$. Denote by $\mathbb{D}_k$ the Lie group of $k$-jets of diffeomorphisms
preserving the point $a$. This group is obtained from $\mathbb{D}_1=\op{GL}(T)$ by successive
extensions according to the exact 3-sequence
 $$
0\to\Delta_k \longrightarrow \mathbb{D}_k\longrightarrow \mathbb{D}_{k-1}\to\{e\},
 $$
where $\Delta_k=\{[\vp]_x^k:[\vp]_x^{k-1}=[\op{id}]_x^{k-1}\}\simeq S^kT^*\otimes T$ is Abelian ($k>1$).

Denote by $\op{St}_k\subset\mathbb{D}_{k+1}$ the stabilizer of a generic point
$a_k\in \mathcal{SD}_{\textrm x}^k$, and by $\op{St}^0_k$ its connected component of unity.

 \subsection{Self-dual metrics: stabilizers}

We refer to \cite{LY} for computations of stabilizers and note that even though
the computation there is done for generic metrics, it applies to self-dual metrics as well.
Thus in the metric case the stabilizers are the following:
$\op{St}_0=\op{St}_1=O(g)$, and $\op{St}^0_k=0$ for $k\ge2$.

Consequently the action of the pseudogroup $G$ on jets of order $k\ge2$ is almost free,
meaning that $\mathbb{D}_{k+1}$ has a discrete stabilizer on $\mathcal{SD}^k_m|_a$.

 \subsection{Self-dual conformal structures: stabilizers}

The stabilizers for general conformal structures were computed in \cite{K2}.
In the self-dual case there is a deviation from the general result. Denote by
$\mathcal{C}_M=S^2_+T^*M/\R_+$ the bundle of conformal metric structures.

 \begin{lem} {\rm\bf(\cite{K2})}
The following is a natural isomorphism:
$$T_{[g]}(\mathcal{C}_M\!)=\op{End}^\text{\rm sym}_0(T)=\{A:T\to T\,|\, g(Au,v)=g(u,Av),\op{Tr}(A)=0\}.$$
 \end{lem}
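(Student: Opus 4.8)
The plan is to reduce the statement to linear algebra on a single fiber and then to check that the resulting identification is independent of all choices, which is precisely the content of the word \emph{natural}. Fix $a\in M$, write $T=T_aM$ and $n=\dim T$, and recall that the fiber of $\mathcal{C}_M$ over $a$ is the quotient of the open convex cone $S^2_+T^*\subset S^2T^*$ by the scaling action of $\R_+$. Since $S^2_+T^*$ is open in the vector space $S^2T^*$, its tangent space at any representative $g$ is canonically $S^2T^*$, and the tangent line to the $\R_+$-orbit through $g$ is $\R\cdot g$. Hence the vertical (fiber) tangent space, which is what the right-hand side records, is
$$T_{[g]}(\mathcal{C}_M)=S^2T^*/\R\cdot g.$$

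Next I would use the representative $g$ to raise an index. The assignment $h\mapsto A$ determined by $h(u,v)=g(Au,v)$ identifies $S^2T^*$ with the space $\op{End}^{\text{\rm sym}}(T)$ of $g$-symmetric endomorphisms, since the symmetry $h(u,v)=h(v,u)$ is exactly the condition $g(Au,v)=g(u,Av)$; this is the same index-raising used earlier for the operators $g_0^{-1}\z_i$. Under this identification the distinguished line $\R\cdot g$ goes to $\R\cdot\op{Id}$, because $g$ itself corresponds to the identity. Splitting each $g$-symmetric $A$ as $A=A_0+\tfrac1n\op{Tr}(A)\,\op{Id}$ with $A_0$ trace-free then realizes the quotient by $\R\cdot\op{Id}$ as the subspace of trace-free $g$-symmetric endomorphisms, giving
$$S^2T^*/\R\cdot g\;\cong\;\op{End}^{\text{\rm sym}}(T)/\R\cdot\op{Id}\;\cong\;\op{End}^{\text{\rm sym}}_0(T).$$
As a consistency check, for $\dim T=4$ this yields $\dim S^2T^*-1=10-1=9$, matching the number of components of a $4$D conformal structure noted above.

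The one point that genuinely requires care — and the only real obstacle — is naturality, i.e.\ independence of the chosen representative $g\in[g]$ and of the chosen lift of a tangent vector. I would verify this by direct computation: a tangent vector $v\in T_{[g]}(\mathcal{C}_M)$ is represented by a curve $g_t$ with $g_0=g$, so $h=\dot g_0$ is well defined modulo $\R\cdot g$; replacing $g$ by $\l g$ and the lift by $\m(t)g_t$ with $\m(0)=\l$ changes $h$ into $\dot\m(0)\,g+\l h$, and then $(\l g)^{-1}\bigl(\dot\m(0)\,g+\l h\bigr)=\tfrac{\dot\m(0)}{\l}\op{Id}+g^{-1}h$, whose trace-free part equals the trace-free part of $g^{-1}h$. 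Thus $A_0$ depends neither on the representative nor on the lift, and the very same computation shows that the construction is equivariant under the linear isotropy action of $\op{GL}(T)$, and in particular of $O(g)$; this equivariance is what makes the isomorphism natural and is exactly what is needed for the stabilizer computation that follows. The underlying linear algebra is routine; essentially all the content sits in this invariance check.
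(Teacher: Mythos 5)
Your proof is correct: the identification of the vertical tangent space of $S^2_+T^*M/\R_+$ with $S^2T^*/\R\cdot g$, the index-raising sending $\R\cdot g$ to $\R\cdot\op{Id}$, and the explicit check that the trace-free part of $g^{-1}\dot g_0$ is independent of the representative and the lift are exactly what is needed. The paper itself gives no proof of this lemma, simply citing \cite{K2}; your argument is the standard one and fills that gap, correctly reading $T_{[g]}(\mathcal{C}_M)$ as the fiberwise tangent space $V_M$ used in the subsequent symbol computation.
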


Denote $V_M=T_{[g]}(\mathcal{C}_M\!)$. The differential group $\mathbb{D}_{k+1}$ acts on
$\mathcal{SD}^k_c$, in particular $\Delta_{k+1}$ acts on it.
 % Let $a_k\in \mathcal{SD}^k_c$ be a generic point.
The next statement is obtained by a direct computation of the symbol of Lie derivative.

 \begin{lem}
The tangent to the orbit $\Delta_{k+1}(a_k)$ is the image $\op{Im}(\zeta_k)\subset T\mathcal{SD}^k_c$ of the map
$\zeta_k$ that is equal to the following composition
 $$
S^{k+1}T^*\ot T\stackrel{\delta}\longrightarrow S^kT^*\otimes(T^*\ot T)
\stackrel{\1\otimes\Pi}\longrightarrow S^kT^*\ot V_M.
 $$
Here $\delta$ is the Spencer operator and $\Pi:T^*\ot T\to V_M\subset T^*\ot T$
is the projection given by
 $$
\langle p,\Pi(B)u\rangle=\tfrac12\langle p,Bu\rangle
+\tfrac12\langle u_\flat,Bp^\sharp\rangle-\tfrac1n\op{Tr}(B)\langle p,u\rangle,
 $$
where $u\in T,p\in T^*,B\in T^*\ot T$ are arbitrary,
$\langle\cdot,\cdot\rangle$ denotes the pairing between $T^*$ and $T$,
and $u_\flat=g(u,\cdot)$, $p^\sharp=g^{-1}(p,\cdot)$ for some representative $g\in[g]$,
on which the right-hand side does not depend.
 \end{lem}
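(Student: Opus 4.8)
The plan is to prove this lemma by directly computing the linearization of the $\Delta_{k+1}$-action on the space of jets $\mathcal{SD}^k_c$, i.e.\ the symbol of the Lie derivative of a conformal structure along a vector field whose $(k-1)$-jet vanishes. The key point is that elements of $\Delta_{k+1}\simeq S^{k+1}T^*\ot T$ correspond to vector fields $\vp$ on $M$ whose Taylor expansion at $a$ begins at order $k+1$, so the infinitesimal action on the $k$-jet of $[g]$ is governed entirely by the Lie derivative $\mathcal{L}_\vp[g]$ taken modulo terms that are irrelevant at this jet level.

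First I would recall that the Lie derivative of a metric is $(\mathcal{L}_\vp g)_{ij}=\nabla_i\vp_j+\nabla_j\vp_i$ (the conformal Killing operator before removing the trace), so that for the conformal class one takes the trace-free symmetric part, which is exactly the projection $\Pi$ appearing in the statement. I would verify that the stated formula for $\Pi$ is precisely this symmetrization-plus-detracing operator: the term $\tfrac12\langle p,Bu\rangle+\tfrac12\langle u_\flat,Bp^\sharp\rangle$ symmetrizes $B$ with respect to $g$, while $-\tfrac1n\op{Tr}(B)\langle p,u\rangle$ removes the trace, landing in $V_M=\op{End}^\text{sym}_0(T)$ as identified in the previous lemma. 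A short check confirms that raising and lowering indices with any representative $g\in[g]$ gives the same result modulo the conformal scaling, since both the symmetrization and the trace-removal are conformally natural; this justifies the claim that the right-hand side is independent of the choice of $g$.

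Next, to see why the full map $\zeta_k$ factors through the Spencer operator $\delta$, I would observe that for $\vp\in S^{k+1}T^*\ot T$ the leading symbol of $\mathcal{L}_\vp[g]$ at order $k$ is obtained by differentiating $\vp$ once and then applying $\Pi$ pointwise. Differentiating the degree-$(k+1)$ symmetric jet once and projecting onto the relevant symmetric piece is exactly the Spencer coboundary $\delta:S^{k+1}T^*\ot T\to S^kT^*\ot(T^*\ot T)$; the covariant derivative may be replaced by the ordinary derivative because the connection terms act on a jet of lower order and hence contribute nothing to the top symbol. Composing with $\1\ot\Pi$ then yields the image in $S^kT^*\ot V_M$, which is the tangent space to the $\Delta_{k+1}$-orbit through $a_k$.

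The main obstacle I anticipate is bookkeeping the symbol correctly — ensuring that only the top-order part of $\mathcal{L}_\vp[g]$ survives and that the lower-order curvature and connection contributions genuinely drop out at the level of $k$-jets. Concretely, one must argue that the self-duality constraint $W_-=0$ does not interfere with this symbol computation, i.e.\ that the image $\op{Im}(\zeta_k)$ computed in $J^k(S^2_+T^*M)$ remains the correct tangent to the orbit after restriction to the submanifold $\mathcal{SD}^k_c$. This follows from the regularity observation made earlier (that $\mathcal{SD}^k_m$ is regular and generic self-dual metrics have no symmetry), but it should be stated explicitly so that the prolonged action on $\mathcal{SD}^k_c$ inherits the same symbol $\zeta_k$. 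Once these symbol-level identifications are in place, the formula for $\zeta_k$ as the asserted composition follows immediately.
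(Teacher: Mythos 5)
Your proposal is correct and follows exactly the route the paper indicates: the paper offers no written proof beyond the remark that the lemma ``is obtained by a direct computation of the symbol of Lie derivative,'' and your computation---identifying $\Pi$ as the $g$-symmetric trace-free part of the conformal Killing operator and the top-order differentiation of $\vp\in S^{k+1}T^*\ot T$ with the Spencer coboundary $\delta$---is precisely that computation carried out. The only superfluous point is your worry about the constraint $W_-=0$: since $\Delta_{k+1}$ preserves $\mathcal{SD}^k_c$, the orbit lies in the equation and its tangent automatically lands in $T\mathcal{SD}^k_c$, so no separate regularity argument is needed for this lemma.
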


Recall that $i$-th prolongation of a Lie algebra $\mathfrak{h}\subset\op{End}(T)$
is defined by the formula $\mathfrak{h}^{(i)}=S^{i+1}T^*\ot T\cap S^iT^*\ot\mathfrak{h}$.
As is well-known, for the conformal algebra of $[g]$ it holds:
$\mathfrak{co}(g)^{(1)}=T^*$ and $\mathfrak{co}(g)^{(i)}=0$, $i>1$.

 \begin{lem}\label{Lzeta}
We have $\op{Ker}(\zeta_k)=0$ for $k>1$, and therefore
the projectors $\rho_{k+1,k}:\mathbb{D}_{k+1}\to\mathbb{D}_k$ induce the injective homomorphisms
$\op{St}_k\to\op{St}_{k-1}$ and $\op{St}^0_k\to\op{St}^0_{k-1}$ for $k>1$.
 \end{lem}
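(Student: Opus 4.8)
The plan is to identify $\op{Ker}(\zeta_k)$ with a prolongation of the conformal Lie algebra and then invoke the vanishing of that prolongation. First I would analyze the projection $\Pi$. Writing $T^*\ot T\cong\op{End}(T)$ and using a representative $g\in[g]$ to split this into the $g$-symmetric and $g$-skew parts together with the trace line, the defining formula for $\Pi$ exhibits $\Pi(B)$ as the $g$-symmetric trace-free part of $B$: the first two terms symmetrize $B$ with respect to $g$, and the last subtracts its trace. Hence $B\in\op{Ker}(\Pi)$ exactly when the $g$-symmetric part of $B$ is a multiple of $\1$, i.e.\ when $B\in\mathfrak{so}(g)\oplus\R\cdot\1=\mathfrak{co}(g)$. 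Thus $\op{Ker}(\Pi)=\mathfrak{co}(g)$, and one checks this subspace is independent of the chosen representative.

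Next I would read off the kernel of $\zeta_k=(\1\ot\Pi)\circ\delta$. An element $\phi\in S^{k+1}T^*\ot T$ lies in $\op{Ker}(\zeta_k)$ precisely when $\delta\phi\in S^kT^*\ot\op{Ker}(\Pi)=S^kT^*\ot\mathfrak{co}(g)$. Under the Spencer embedding $S^{k+1}T^*\ot T\hookrightarrow S^kT^*\ot T^*\ot T$ this is exactly the condition defining the $k$-th Lie-algebra prolongation, so $\op{Ker}(\zeta_k)=\mathfrak{co}(g)^{(k)}$. Using the recalled fact $\mathfrak{co}(g)^{(k)}=0$ for $k>1$, we obtain $\op{Ker}(\zeta_k)=0$ for $k>1$, which is the first assertion.

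For the second assertion I would pass to the stabilizers via the abelian kernel $\Delta_{k+1}=\op{Ker}(\rho_{k+1,k})\simeq S^{k+1}T^*\ot T$. Since the differential-group actions on the jet tower are compatible with the projections $\rho_{k+1,k}$ and $\pi_{k,k-1}$, any $g\in\op{St}_k\subset\mathbb{D}_{k+1}$ projects to $\rho_{k+1,k}(g)$ fixing $\pi_{k,k-1}(a_k)=a_{k-1}$, so $\rho_{k+1,k}(\op{St}_k)\subset\op{St}_{k-1}$. The kernel of $\rho_{k+1,k}|_{\op{St}_k}$ is $\op{St}_k\cap\Delta_{k+1}$, the elements of $\Delta_{k+1}$ fixing $a_k$. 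By the preceding lemma the $\Delta_{k+1}$-action on the fibre over $a_{k-1}$ is by the affine translations $\phi\mapsto a_k+\zeta_k(\phi)$ with linear part $\zeta_k$, so this stabilizer equals $\op{Ker}(\zeta_k)$, which vanishes for $k>1$. Hence $\rho_{k+1,k}$ restricts to injective homomorphisms $\op{St}_k\to\op{St}_{k-1}$ and, on connected components, $\op{St}^0_k\to\op{St}^0_{k-1}$.

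The main obstacle I expect is the bookkeeping in the first step: verifying cleanly that the symmetrization plus trace correction in the formula for $\Pi$ cuts out exactly $\mathfrak{co}(g)$ as its kernel, independently of the representative $g$. Once $\op{Ker}(\Pi)=\mathfrak{co}(g)$ is established, the identification $\op{Ker}(\zeta_k)=\mathfrak{co}(g)^{(k)}$ and its vanishing are formal, and the stabilizer statement follows directly from the structure of $\Delta_{k+1}$ and the affine nature of its action.
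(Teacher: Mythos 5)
Your proposal is correct and follows essentially the same route as the paper: identify $\op{Ker}(\zeta_k)$ with a prolongation of $\mathfrak{co}(g)$ via $\op{Ker}(\Pi)=\mathfrak{co}(g)$, invoke $\mathfrak{co}(g)^{(i)}=0$ for $i>1$, and deduce $\Delta_{k+1}\cap\op{St}_k=\{e\}$ to get injectivity of the induced maps on stabilizers. You merely spell out the verification that $\Pi(B)$ is the $g$-symmetric trace-free part of $B$ (which the paper leaves implicit), and your indexing $\op{Ker}(\zeta_k)=\mathfrak{co}(g)^{(k)}$ is the one consistent with the paper's stated convention for prolongations.
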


 \begin{proof}
If $\zeta_k(\Psi)=0$, then $\delta(\Psi)\in S^kT^*\ot\mathfrak{co}(g)$, where
$\mathfrak{co}(g)\subset\op{End}(T)$ is the conformal algebra. This means that
$\Psi\in\mathfrak{co}(g)^{(k+1)}=0$, if $k>1$. Thus we conclude injectivity of $\zeta_k$:
$\Delta_{k+1}\cap\op{St}_k=\{e\}$, whence the second claim.
 \end{proof}

The stabilizers of low order (for any $n\ge3$) are the following. For any $a_0\in \mathcal{C}_M$
its stabilizer is $\op{St}_0=CO(g)=(\op{Sp}(1)\times_{\mathbb{Z}_2}\op{Sp}(1))\times\R_+$.

Next, the stabilizer $\op{St}_1\subset\mathbb{D}_2$ of
$a_1\in J^1(\mathcal{C}_M\!)$ is the extension
(by derivations) of $\op{St}_0$ by $\mathfrak{co}(g)^{(1)}=T^*\stackrel{\iota}\hookrightarrow\Delta_2$,
where $\iota:T^*\to S^2T^*\ot T$ is given by
 $$
\iota(p)(u,v)=\langle p,u\rangle v+\langle p,v\rangle u
-\langle u_\flat,v\rangle p^\sharp,
 $$
for $p\in T^*$, $u,v\in T$.
In other words, we have $\op{St}_1=CO(g)\ltimes T$.

Since for $G$-action on $\mathcal{SD}^2_c$ there is precisely 1 scalar differential invariant,
we get $\dim\op{St}_2=(16+40+80)-(9+36+85-1)=7$.
This can be also seen as follows. Since $\op{St}^0_2\subset\op{St}_1$ preserves the hyper-Hermitian
structure determined by generic 2-jet $a_2\in\mathcal{SD}_c^2$ (see Section \ref{S2})
the $\R_+$ factor and one of the $\op{Sp}(1)$ copies in $\op{St}_0$ disappears from the stabilizer of 2-jet,
and we get $\op{St}^0_2\simeq \op{Sp}(1)\ltimes T$.

 \begin{lem}\label{St}
For $k\ge3$ we have: $\op{St}^0_k=\{e\}$.
 \end{lem}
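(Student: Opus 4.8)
The plan is to reduce the whole family of statements to the single equality $\op{St}^0_3=\{e\}$ and then to destroy the $7$-dimensional residual group $\op{St}^0_2\simeq\op{Sp}(1)\ltimes T$ in two stages, using the canonical frame constructed in Section \ref{S2}. First, by Lemma \ref{Lzeta} the projections $\rho_{k+1,k}$ give injective homomorphisms $\op{St}^0_k\hookrightarrow\op{St}^0_{k-1}$ for every $k>1$; composing them for $k\ge3$ yields an embedding $\op{St}^0_k\hookrightarrow\op{St}^0_3$. Hence it suffices to treat $k=3$, and for that the same lemma realizes $\op{St}^0_3$ as a connected subgroup of $\op{St}^0_2\simeq\op{Sp}(1)\ltimes T$.

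Next I would eliminate the linear (that is, $\op{Sp}(1)$) part. A generic $3$-jet $a_3\in\mathcal{SD}^3_c$ determines the canonical vector $e_1=g_0^{-1}\hat dI/\|g_0^{-1}\hat dI\|_{g_0}\in T_aM$ of Section \ref{S2}, and the assignment $a_3\mapsto e_1$ is natural, hence $G$-equivariant: $e_1(\phi\cdot a_3)=d\phi_a(e_1(a_3))$. For $\phi\in\op{St}^0_3$ this forces $d\phi_a(e_1)=e_1$. The surviving $\op{Sp}(1)$ is the factor acting on $T\cong\mathbb{H}$ by unit-quaternion multiplication, which is free on $T\setminus\{0\}$, so $d\phi_a(e_1)=e_1\ne0$ gives $d\phi_a=\1$. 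An element of $\op{Sp}(1)\ltimes T$ with trivial linear part lies in $T$; thus $\op{St}^0_3$ lands inside the translation subgroup $T$.

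The remaining, and genuinely hard, step is to kill these translations $\iota(p)$, $p\ne0$. They have trivial linear part and fix $a_2$, so they are invisible to $d\phi_a$; moreover a short computation (using that $\mathcal{L}_{X_p}I\equiv0$ since $I$ is a differential invariant, together with the vanishing of the $1$-jet of the generator $X_p$ at $a$) shows that even $\hat dI(a)$ is translation-invariant. One must therefore examine the action of $\mathfrak{t}\cong T^*$ on the order-$3$ part of the jet itself. Concretely, I would compute the $3$-jet of $\mathcal{L}_{X_p}[g]$ at $a$ through the symbol of the Lie derivative, exactly as in the lemma defining $\zeta_k$, and identify the induced linear action of $\mathfrak{t}$ on the fibre of $\mathcal{SD}^3_c\to\mathcal{SD}^2_c$. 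The assertion $\op{St}^0_3=\{e\}$ is precisely that the map sending $p$ to the $3$-jet of $\mathcal{L}_{X_p}[g]$ at $a$, taken modulo the tangent directions of the $\Delta_3\oplus\Delta_4$-orbit, is injective on a Zariski-open subset. Proving this nondegeneracy—which necessarily brings in the first covariant derivative of $W_+$ and is where self-duality and the genericity of $a_3$ enter—is the main obstacle; everything else is formal. Once it is established, only $p=0$ survives, so $\op{St}^0_3=\{e\}$, and by the first paragraph $\op{St}^0_k=\{e\}$ for all $k\ge3$.
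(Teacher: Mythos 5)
Your reduction to $k=3$ via Lemma \ref{Lzeta} and your elimination of the $\op{Sp}(1)$ factor are correct and essentially match the paper: the paper uses preservation of the whole canonical frame $e_1,\dots,e_4$ where you use only $e_1$ together with freeness of the unit-quaternion action on $T\setminus\{0\}$, which comes to the same thing. The problem is the last step. You explicitly do not prove that the translation factor $T\cong\mathfrak{co}(g)^{(1)}=T^*\subset\Delta_2$ acts without fixed points on generic $3$-jets; you only formulate the required injectivity (``the map sending $p$ to the $3$-jet of $\mathcal{L}_{X_p}[g]$ modulo the $\Delta_3\oplus\Delta_4$-orbit directions is injective'') and declare it the main obstacle. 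Since that is precisely the content of the lemma beyond what was already known from $\op{St}^0_2\simeq\op{Sp}(1)\ltimes T$, the proposal has a genuine gap at its central step.

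The paper closes this gap differently, and more cheaply, than the head-on symbol computation you sketch: an element of $\op{St}^0_3$ fixes the $3$-jet $a_3$ and hence must preserve the $1$-jets of all structures canonically attached to $2$-jets --- in particular the $1$-jet of the hyper-Hermitian triple $(J_1,J_2,J_3)$ and of the invariant $I$. An element of $\Delta_2$ with $2$-jet $\Psi\in T^*\hookrightarrow S^2T^*\ot T$ has trivial linear part, so it shifts the $1$-jet of a canonically defined tensor by a pointwise algebraic term (essentially the commutator $[\Psi_u,J_i]$ for $u\in T$, plus the action on $\hat dI$); requiring all these shifts to vanish is a linear condition on $\Psi$ whose kernel in $\mathfrak{co}(g)^{(1)}$ is trivial for generic jets. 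This replaces your unproven nondegeneracy of the full $\zeta$-type map by a concrete finite-dimensional linear-algebra statement about the commutant of the quaternionic structure inside $\mathfrak{co}(g)$. Without either that observation or an actual verification of your injectivity claim, the conclusion $\op{St}^0_3=\{e\}$ does not follow from what you have written.
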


 \begin{proof}
In Section \ref{S2} we constructed a canonical frame $e_1,\dots,e_4$ on $T$ depending on
(generic) jet $a_3$. In other words, we constructed a frame on the bundle $\pi_3^*TM$
over a Zariski open set in $\mathcal{SD}_c^3$.

The elements from $\op{St}^0_3$ shall preserve this frame, and so the last component $\op{Sp}(1)$
from $\op{St}_0$ is reduced. But also the elements from $\op{St}^0_3$ shall preserve the 1-jet of the
hyper-Hermitian structure and the invariant $I$ determined by 2-jets, whence also the factor $T$ is reduced,
and $\op{St}^0_3$ is trivial (we take the connected component because of the undetermined signs $\pm$
in the normalizations). Hence the stabilizers $\op{St}^0_k$ for $k\ge 3$ are trivial as well.
 \end{proof}

%%%%%%%%%%%%%%%%%%%%%%%%%%%%%%%%%%%%%%%%%%%%%%%%%%%%%%%%%%%%%%%%%%%%%%%%%%%%
%4%
 \section{Hilbert polynomial and Poincar\'e function for $\mathcal{SD}$}\label{S4}

Now we can compute the number of independent differential invariants. Since $G$ acts transitively on $M$
the codimension of the orbit of $G$ in $\mathcal{SD}^k_{\rm x}$ is equal to the codimension of the
orbit of $\mathbb{D}_{k+1}$ in $\mathcal{SD}^k_{\rm x}|_a$ (where $a\in M$ is a fixed point
and ${\rm x}$ is either $m$ or $c$).
Denoting the orbit through a generic $k$-jet $a_k$ by $\mathcal{O}_k\subset\mathcal{SD}^k_{\rm x}|_a$ we have:
 $$
\dim(\mathcal{O}_k)=\dim\mathbb{D}_{k+1}-\dim\op{St}_k.
 $$
Notice that
 $$
\op{codim}(\mathcal{O}_k)=\dim \mathcal{SD}^k_{\rm x}|_a-\dim(\mathcal{O}_k)=\op{trdeg}\mathfrak{F}_k
 $$
is the number of (functionally independent) scalar differential invariants of order $k$
(here $\op{trdeg}\mathfrak{F}_k$ is the transcendence degree of the field of rational differential invariants on
$\mathcal{SD}^k_{\rm x}$).

The Hilbert function is the number of ``pure order" $k$ differential invariants
$H(k)=\op{trdeg}\mathfrak{F}_k-\op{trdeg}\mathfrak{F}_{k-1}$.
It is known to be a polynomial for large $k$,
 % (we refer to \cite{KL2} for the proof in our context)
so we will refer to it as the Hilbert polynomial.

The Poincar\'e function is the generating function for the Hilbert polynomial, defined by
$P(z)=\sum_{k=0}^\infty H(k)z^k$. This is a rational function with the only pole $z=1$ of
order equal to the minimal number of invariant derivations in the Lie-Tresse generating set \cite{KL2}.

 \subsection{Counting differential invariants}
The results of Section \ref{S3} allow to compute the Hilbert polynomial and the Poincar\'e function.

 \begin{theorem}
The Hilbert polynomial for $G$-action on $\mathcal{SD}_m$ is
 $$
H_m(k)=\left\{\begin{array}{ll}
0 & \text{ for }k<2,\\
9 & \text{ for }k=2,\\
\tfrac16(k-1)(k^2+25k+36) & \text{ for }k>2.
\end{array}\right.
 $$
The corresponding Poincar\'e function is equal to
 $$
P_m(z)=\frac{z^2(9+4z-30z^2+24z^3-6z^4)}{(1-z)^4}.
 $$
 \end{theorem}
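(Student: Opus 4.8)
The plan is to compute the Hilbert polynomial $H_m(k)$ directly from the dimension formula

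$$H_m(k)=\op{codim}(\mathcal{O}_k)-\op{codim}(\mathcal{O}_{k-1}),$$

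using the results of Section \ref{S3}. Since $\op{codim}(\mathcal{O}_k)=\dim\mathcal{SD}^k_m|_a-\dim\mathbb{D}_{k+1}+\dim\op{St}_k$, and the stabilizers $\op{St}^0_k$ are trivial for $k\ge2$ (so $\dim\op{St}_k=0$ there), the main task reduces to finding the two fiber dimensions $\dim\mathcal{SD}^k_m|_a$ and $\dim\mathbb{D}_{k+1}$ as explicit polynomials in $k$.

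First I would compute $\dim\mathbb{D}_{k+1}$. From the exact sequence defining $\mathbb{D}_k$ with $\Delta_k\simeq S^kT^*\ot T$ and $n=\dim T=4$, one gets $\dim\mathbb{D}_{k+1}=\sum_{j=1}^{k+1}\dim(S^jT^*\ot T)=4\sum_{j=1}^{k+1}\binom{j+3}{3}$, a polynomial of degree $4$ in $k$. Next I would compute $\dim\mathcal{SD}^k_m|_a$. The fiber of $J^k(S^2_+T^*M)$ over $a$ has dimension $\sum_{j=0}^k 10\binom{j+3}{3}$ (since $\dim S^2T^*=10$ in dimension $4$); the self-duality equation $W_-=0$ cuts this down, and because $\mathcal{SD}^k_m$ is the prolongation of a regular second-order equation whose symbol stabilizes, the fiber dimension differs from the free-jet count by a polynomial correction governed by the symbol of the equation. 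The cleanest route is to note that $H_m(k)$ counts pure-order-$k$ invariants, and for $k>2$ this equals the difference between the dimension of the pure-order-$k$ part of $\mathcal{SD}^k_m$ and the pure-order-$k$ part of $\mathbb{D}_{k+1}$, namely $\dim\bigl(S^kT^*\ot S^2T^*\bigr)_{\!\text{reduced by }W_-}-\dim\Delta_{k+1}$ after accounting for prolongation of the symbol.

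I would then assemble these into the three cases. For $k<2$ there are no invariants by the geodesic-coordinates argument, giving $H_m(k)=0$; for $k=2$ the explicit count $\dim\{R_g\mid W_-=0\}-\dim O(g)=(20-5)-6=9$ from Section \ref{S2} gives $H_m(2)=9$; for $k>2$ I expect the polynomial expression to collapse, after simplification, to $\tfrac16(k-1)(k^2+25k+36)$. The Poincar\'e function follows by summing $P_m(z)=\sum_{k\ge0}H_m(k)z^k$: I would split off the $k=2$ term, write the tail $\sum_{k\ge3}\tfrac16(k-1)(k^2+25k+36)z^k$ using the standard generating functions $\sum_k\binom{k+3}{3}z^k=(1-z)^{-4}$ and its lower-degree analogues, and verify the claimed numerator $z^2(9+4z-30z^2+24z^3-6z^4)$ by checking the first several Taylor coefficients against $H_m(0),\dots$.

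\textbf{The main obstacle} will be the bookkeeping in the fiber-dimension count for $\mathcal{SD}^k_m|_a$: one must correctly track how the self-duality symbol $W_-=0$ prolongs, i.e.\ ensure that the five conditions at order two propagate to the right number of conditions at each higher order, rather than naively subtracting $5\binom{k-2+3}{3}$. This is exactly the point where regularity of $\mathcal{SD}^k_m$ (established in Section \ref{S2}) is used, guaranteeing that the orbit dimensions match those in the free jet space so that the symbol behaves as the generic prolongation predicts. Once the symbol count is pinned down, the rest is routine polynomial algebra and a generating-function verification, so the cubic leading behavior and the degree-$4$ pole of $P_m$ at $z=1$—consistent with four invariant derivations $\nabla_1,\dots,\nabla_4$—serve as useful consistency checks on the final formulas.
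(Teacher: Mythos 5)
Your proposal is correct and follows essentially the same route as the paper: for $k>2$ it reduces $H_m(k)$ to $\dim\g_k-\dim\Delta_{k+1}=10\binom{k+3}{3}-5\binom{k+1}{3}-4\binom{k+4}{3}$ using almost-freeness of the action and the fact that the five second-order equations prolong without syzygies, handles $k=2$ by the curvature count $(20-5)-6=9$, and obtains $P_m(z)$ by summing the generating series. The only cosmetic difference is that the ``naive'' subtraction of $5\binom{k+1}{3}$ you flag as the main obstacle is in fact exactly what the paper does, justified there by the system being determined (mod gauge) with no differential syzygies rather than by orbit-regularity.
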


Notice that $H_m(k)\sim\tfrac1{3!}\,k^3$, meaning that the moduli of self-dual metric structures
are parametrized by $1$ function of $4$ arguments. This function is the unavoidable rescaling factor.

 \begin{proof}
As for the general metrics, there are no invariants of order $<2$. Since $\op{St}^0_2=0$, we have:
 $$
H_m(2)=\dim\mathcal{SD}^2_m|_a-\dim\mathbb{D}_3=(10+40+95)-(16+40+80)=9.
 $$
Alternatively, the only  invariant of the 2-jet of a metric is the Riemann curvature tensor.
Since $W_-=0$, it has $20-5=15$ components and is acted upon effectively by the group $O(g)$ of dimension 6;
hence the codimension of a generic orbit is $15-6=9$.

Starting from 2-jet we impose the self-duality constraint that, as discussed in the introduction,
consist of 5 equations and is a determined system (mod gauge). In particular, there are no
differential syzygies between these 5 equations, so that in ``pure" order $k\ge2$ the number of independent
equations is $5\cdot\binom{k+1}3$. Thus the symbol of the self-duality metric equation $W_-=0$ on $g$, given by
 $$\g_k=\op{Ker}(d\pi_{k,k-1}:T\mathcal{SD}_m^k\to T\mathcal{SD}_m^{k-1})$$
has dimension $\dim(S^kT^*\ot S^2T^*)-\#[\text{independent equations}]$. %$=10\binom{k+3}3-5\cdot\binom{k+1}3$

Since the pseudogroup $G$ acts almost freely on jets of order $k\ge2$ (freely from some order $k$), we have:
 $$
H_m(k)=\dim\g_k-\dim\Delta_{k+1}=10\cdot\binom{k+3}3-5\cdot\binom{k+1}3-4\cdot\binom{k+4}3
%=\tfrac16(k-1)(k^2+25k+36).
 $$
whence the claim for the Hilbert polynomial.
The formula for the Poincar\'e function follows.
 \end{proof}

 \begin{theorem}\label{HandP}
The Hilbert polynomial for $G$-action on $\mathcal{SD}_c$ is
 $$
H_c(k)=\left\{\begin{array}{ll}
0 & \text{ for }k<2,\\
1 & \text{ for }k=2,\\
13 & \text{ for }k=3,\\
3k^2-7 & \text{ for }k>3.
\end{array}\right.
 $$
The corresponding Poincar\'e function is equal to
 $$
P_c(z)=\frac{z^2(1+10z+5z^2-17z^3+7z^4)}{(1-z)^3}.
 $$
 \end{theorem}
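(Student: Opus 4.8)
The plan is to reduce $H_c$ entirely to the dimension counts assembled in Sections \ref{S3}--\ref{S4}. By the relations recorded at the start of Section \ref{S4}, the number of functionally independent invariants of order $\le k$ is the orbit codimension
$$
\op{trdeg}\mathfrak{F}_k=\dim\mathcal{SD}^k_c|_a-\dim\mathbb{D}_{k+1}+\dim\op{St}_k,
$$
and then $H_c(k)=\op{trdeg}\mathfrak{F}_k-\op{trdeg}\mathfrak{F}_{k-1}$. Everything thus comes down to three inputs: the fibre dimensions of the jet tower $\mathcal{SD}^k_c$, the numbers $\dim\mathbb{D}_{k+1}$, and the stabilizer dimensions $\dim\op{St}_k$ from Section \ref{S3}.

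First I would record the jet dimensions. Since $V_M=T_{[g]}\mathcal{C}_M$ has dimension $9$, the pure order-$j$ part of $J^j(\mathcal{C}_M)|_a$ is $9\binom{j+3}3$; the constraint $W_-=0$ is a system of $5$ second-order equations which, being determined modulo gauge with no differential syzygies, cuts out $5\binom{j+1}3$ conditions in pure order $j$. Hence the pure order-$j$ part of $\mathcal{SD}^j_c|_a$ is $9\binom{j+3}3-5\binom{j+1}3$ (valid for all $j\ge0$, since the binomial vanishes for $j\le1$), and summing gives $\dim\mathcal{SD}^k_c|_a$. For the differential group, $\dim\mathbb{D}_{k+1}=16+\sum_{j=2}^{k+1}4\binom{j+3}3$, so that $\dim\mathbb{D}_{k+1}-\dim\mathbb{D}_k=\dim\Delta_{k+1}=4\binom{k+4}3$.

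Next I would assemble these with the stabilizer data. By Lemma \ref{St} one has $\dim\op{St}_k=0$ for $k\ge3$, so for $k\ge4$ both $\op{St}_k$ and $\op{St}_{k-1}$ drop out and
$$
H_c(k)=\Bigl(9\binom{k+3}3-5\binom{k+1}3\Bigr)-4\binom{k+4}3,
$$
which after expanding the binomials collapses to $3k^2-7$ (the cubic and linear terms cancel, leaving only $\tfrac16(18k^2-42)$). The two low cases require the non-trivial stabilizers and are handled by hand: $\op{trdeg}\mathfrak{F}_2=1$ is the single invariant $I=\op{Tr}(W_+^2)$, giving $H_c(2)=1$, while at $k=3$ the jump $\dim\op{St}_2=7\to\dim\op{St}_3=0$ yields $\op{trdeg}\mathfrak{F}_3=\dim\mathcal{SD}^3_c|_a-\dim\mathbb{D}_4=290-276=14$, hence $H_c(3)=14-1=13$. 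Finally I would sum $P_c(z)=z^2+13z^3+\sum_{k\ge4}(3k^2-7)z^k$ against the standard series $\sum k^2z^k=z(1+z)/(1-z)^3$ and $\sum z^k=(1-z)^{-1}$, collecting terms over $(1-z)^3$ to obtain the stated rational function, whose pole at $z=1$ has order $3$.

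The one genuinely non-routine point is the symbol input behind the jet dimensions: that the prolonged self-duality equations remain independent in each pure order (the $5\binom{j+1}3$ count, i.e.\ absence of differential syzygies) and that the orbit codimension really equals $\dim\mathcal{SD}^k_c|_a-\dim\mathbb{D}_{k+1}+\dim\op{St}_k$. The latter is the regularity and eventual freeness of the $\mathbb{D}_{k+1}$-action, secured by Lemmas \ref{Lzeta} and \ref{St}; the former is inherited from the metric case, since the self-duality operator has the same second-order symbol on the $9$-dimensional conformal fibre $V_M$ as on $S^2T^*$. Once these are in hand the remainder is bookkeeping, and the transition at $k=3$ is the only place where the finite-dimensional stabilizer must be tracked explicitly.
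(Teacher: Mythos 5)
Your proposal is correct and follows essentially the same route as the paper: orbit codimension via $\dim\mathbb{D}_{k+1}$ and the stabilizers of Section \ref{S3}, the symbol count $9\binom{k+3}{3}-5\binom{k+1}{3}$ for the prolonged equation, the subtraction of $\dim\Delta_{k+1}=4\binom{k+4}{3}$ in the stable range $k>3$, and the hand computation of the two low orders using $\op{St}^0_2\simeq\op{Sp}(1)\ltimes T$ and $\op{St}^0_3=\{e\}$. All the numerical values ($290-276=14$, hence $H_c(3)=13$, and $\tfrac16(18k^2-42)=3k^2-7$) agree with the paper's proof, as does the summation yielding $P_c(z)$.
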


Notice that $H_c(k)\sim6\cdot\tfrac1{2!}\,k^2$, meaning that the moduli of self-dual conformal metric structures
are parametrized by $6$ function of $3$ arguments. This confirms the count in \cite{G,DFK}.

 \begin{proof}
As for the general metrics, there are no invariants of order $<2$.
We already counted $H_c(2)=1$. Since $\op{St}^0_3=0$, we have:
 \begin{multline*}
H_c(3)=\dim\mathcal{SD}^3_m|_a-\dim\mathbb{D}_4-H_c(2)\\
=(9+36+85+160)-(16+40+80+140)-1=13.
 \end{multline*}

Starting from 2-jet we impose the self-duality constraint, and we computed in the previous proof that this
yields $5\cdot\binom{k+1}3$ independent equations of ``pure" order $k\ge2$.
Thus the symbol of the self-duality conformal equation $W_-=0$ on $[g]$, given by
 $$
\g_k=\op{Ker}(d\pi_{k,k-1}:T\mathcal{SD}_c^k\to T\mathcal{SD}_c^{k-1}),
 $$
has dimension$=\dim(S^kT^*\ot(S^2T^*/\R_+))-\#[\text{independent equations}]$.

Since the pseudogroup $G$ acts almost freely on jets of order $k\ge3$ (freely from some order $k$), we have:
 $$
H_c(k)=\dim\g_k-\dim\Delta_{k+1}=9\cdot\binom{k+3}3-5\cdot\binom{k+1}3-4\cdot\binom{k+4}3
%=\tfrac16(k-1)(k^2+25k+36).
 $$
whence the claim for the Hilbert polynomial.
The formula for the Poincar\'e function follows.
 \end{proof}

 \subsection{The quotient equation}
Let $I_1,\dots,I_4$ be the basic differential invariants of self-dual conformal structures.
For generic such structures $c$ these invariant evaluated on $c$ are independent.
Thus we can fix the gauge by requiring $I_i=x_i$, $i=1,\dots,4$, to be the local coordinates on $M$.
This adds 4 differential equations to 5 equations of self-duality on 9 components of $c$.
Consequently, denoting
 $$
\Sigma_\infty=\{\theta\in\mathcal{SD}^\infty_c:\hat dI_1\we\hat dI_2\we\hat dI_3\we\hat dI_4
\text{ is not defined at }\theta\text{ or vanishes}\},
 $$
the moduli space $(\mathcal{SD}_c^\infty\setminus\Sigma_\infty)/G$ is given as $9\times9$ PDE system
 $$
W_-=0, I_1=x_1, \dots, I_4=x_4.
 $$

%%%%%%%%%%%%%%%%%%%%%%%%%%%%%%%%%%%%%%%%%%%%%%%%%%%%%%%%%%%%%%%%%%%%%%%%%%%%
%5%
 \section{The self-duality equation}\label{S5}
 In the second approach we use a $3 \times 3$ PDE system from \cite{DFK} which encodes all self-dual conformal structures. It was shown in loc.cit.\ that any anti-self-dual conformal structure in neutral signature $(2,2)$ locally takes the form $[g]$ where
 \begin{equation}
 \label{ASDmetric}
 g=dtdx+dzdy+p\,dt^2+2q\,dtdz+r\,dz^2.
 \end{equation}
 Here $p,q,r$ are functions of $(t,x,y,z)$ which satisfy the following three second-order PDEs:
  \begin{equation}
 \begin{array}{c}
 p_{xx}+2q_{xy}+r_{yy}=0,\\
 \ \\
 m_x+n_y=0,\\
 \ \\
 m_z-qm_x-rm_y+(q_x+r_y)m=n_t-pn_x-qn_y+(p_x+q_y)n,
 \end{array}
 \label{SDE}
  \end{equation}
 where
$$
m:= p_z-q_t+pq_x-qp_x+qq_y-rp_y, ~~~
n:=q_z-r_t+qr_y-rq_y+pr_x-qq_x.
 $$

Conversely, any such conformal structure is anti-self-dual. Therefore we can, instead of looking at arbitrary self-dual conformal structures, look at conformal structures $[g]$ where $g$ is a metric of the Pleba\'nski-Robinson form (\ref{ASDmetric}) satisfying (\ref{SDE}). So from now on we restrict to self-dual conformal structures in 
the neutral signature $(2,2)$.

\begin{rk}
These equations are admittedly describing anti-self-dual metrics ($*W=-W$) instead of self-dual metrics ($*W=W$). However, in order to define the Hodge operator, one must specify an orientation. Change of orientation interchanges the equations, so from a local viewpoint self-dual and anti-self-dual structures are the same.
\end{rk}

Conformal structures of the form (\ref{ASDmetric}) are parametrized by sections of the bundle
$\pi\colon\mathcal{C}_M^{\text{PR}} = M\times \mathbb R^3(p,q,r) \to M$, where $M=\mathbb R^4(t,x,y,z)$.
Self-dual conformal structures must, in addition, satisfy system (\ref{SDE}),
so they are described by a second-order PDE
 \[
\mathcal{SDE}_2 =\{\theta=[(p,q,r)]_x^2 : x \in M, \theta \text{ satisfies (\ref{SDE})}\} \subset
J^2(\mathcal{C}_M^{\text{PR}}).
 \]
We let $\mathcal{SDE}_k \subset J^k=J^k(\mathcal{C}_M^{\text{PR}})$ denote the prolonged equation.
From now on we will omit specification of the bundle over which the jet spaces are constructed, because
it will always be $\mathcal{C}_M^{\text{PR}}$ in what follows.

The prolonged equation $\mathcal{SDE}_k$ is given by $3 \binom{k+2}{4}$ equations in
$J^{k}$ since the system (\ref{SDE}) is determined.
By subtracting this from the jet space dimension $\dim J^k = 4+3 \binom{k+4}{4}$, we find

\begin{align*}
	\dim \mathcal{SDE}_k = 4+3 \binom{k+4}{4}-3 \binom{k+2}{4} = k^3+\frac{9}{2} k^2+\frac{13}{2} k+7.
\end{align*}

\section{Symmetries of $\mathcal{SDE}$}
Self-dual conformal structures locally correspond to sections of $\mathcal{C}_M^\text{PR}$ that are solutions of $\mathcal{SDE}$. This correspondence is not 1-1 as there is some residual freedom left: two solutions of $\mathcal{SDE}$ can still be equivalent up to diffeomorphisms. The goal is to remove this freedom by factoring by diffeomorphisms that preserve the shape of the conformal structure $[g]$ where $g$ is in Pleba\'nski-Robinson form (\ref{ASDmetric}).

These transformations form the symmetry pseudogroup $\mathcal{G}$ of the equation $\mathcal{SDE}$.
We will study its Lie algebra $\mathfrak{g}$. By the Lie-B\"acklund theorem \cite{KLV} for our equation
all symmetries are (prolongations of) point transformations. It turns out that the Lie algebra of
symmetries is the same as the Lie algebra of vector fields preserving the shape of $[g]$.

\subsection{Symmetries of $\mathcal{SDE}$}
A vector field $X$ on $J^0$ is a symmetry of $\mathcal{SDE}$ if the prolonged vector field $X^{(2)}$ is tangent to $\mathcal{SDE}_2 \subset J^2$, i.e. if $X^{(2)} (F_i) = \lambda_i^j F_j$, where $F_1=0,F_2=0,F_3=0$ are the three equations (\ref{SDE}). This gives an overdetermined system of PDEs that can be solved by the standard technique,
and we obtain the following result:

%In this setting there are two natural definitions of a symmetry of self-dual conformal structures:
%\begin{enumerate}
 % \item A diffeomorphism which preserves the form of (\ref{ASDmetric}) up to scaling.
  %\item A point transformation $\varphi$ on $J^0$ whose lift to $J^2$ preserves $\mathcal{SDE}$, i.e. $\varphi^{(2)}(\mathcal{SDE}) \subset \mathcal{SDE}$.
%\end{enumerate}
%It turns out that these two definitions give the same infinitesimal symmetry algebra.
%A vector field $X$ is a symmetry of $\mathcal{SDE}$ if $X^{(k)} (F_i) =0$ at every point in $\mathcal{SDE}$.
%This gives us a set of linear PDEs of first order which, in the case of our current PDE, we are able to solve.

 \begin{theorem}\label{Thm:Symm}
The Lie algebra $\sym$ of symmetries of $\mathcal{SDE}$ is generated by the following five classes of vector fields $X_1(a)$, $X_2(b)$, $X_3(c)$, $X_4(d)$, $X_5(e)$, each of which depends on a function of $(t,z)$:
{\footnotesize
\begin{align*}
&a \partial_t-x a_t \partial_x-x a_z \partial_y+ (x a_{tt}-2p a_t) \partial_p+(x a_{tz}-q a_t-p a_z) \partial_q+(x a_{zz}-2q a_z) \partial_r, \\
&b \partial_z-y b_t \partial_x-y b_z \partial_y+ (y b_{tt}-2q b_t) \partial_p+(y b_{tz}-q b_z-r b_t) \partial_q+(y b_{zz}-2r b_z) \partial_r, \\
&c x \partial_x+c y \partial_y+(cp-xc_t) \partial_p+(c q-\tfrac{1}{2} x c_z- \tfrac{1}{2} y c_t) \partial_q+(c r-y c_z) \partial_r,\\
&d \partial_x - d_t \partial_p - \tfrac{1}{2} d_z \partial_q, \\
&e \partial_y - \tfrac{1}{2} e_t \partial_q - e_z \partial_r.
\end{align*}}
 \end{theorem}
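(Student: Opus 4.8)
The plan is to use the geometric content of $\mathcal{SDE}$: its solutions are exactly the self-dual conformal structures written in the Pleba\'nski--Robinson gauge (\ref{ASDmetric}), and self-duality is preserved by diffeomorphisms and conformal rescalings. I would therefore first identify $\sym$ with the Lie algebra of vector fields $Y$ on $M$ whose flow sends a metric of the form (\ref{ASDmetric}) to a conformal multiple of another such metric, i.e. the vector fields ``preserving the shape'' of $[g]$. One inclusion is immediate: the flow of such a $Y$ is a diffeomorphism, and since self-duality is diffeomorphism- and conformally-invariant it maps solutions of $\mathcal{SDE}$ to solutions, so (the prolongation of) $Y$ is a symmetry. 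For the converse I would invoke the Lie-B\"acklund theorem \cite{KLV}, as in the text, to reduce to point transformations, and then use that $(p,q,r)$ are precisely the conformal data in this gauge, so that a point symmetry descends to a shape-preserving diffeomorphism of $M$.

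The computation of the shape-preserving algebra is then a short linear problem. Writing $Y=\xi^t\partial_t+\xi^x\partial_x+\xi^y\partial_y+\xi^z\partial_z$ with coefficients depending on $(t,x,y,z)$, I would impose
\[
\mathcal{L}_Y g = 2\mu\, g + \dot p\, dt^2 + 2\dot q\, dt\,dz + \dot r\, dz^2
\]
for some functions $\mu,\dot p,\dot q,\dot r$. Reading off $\mathcal{L}_Y g$ monomial by monomial: the cross terms $dt\,dx$ and $dz\,dy$ fix $\mu$ and yield one relation between them; the terms $dt^2,dt\,dz,dz^2$ determine $\dot p,\dot q,\dot r$, hence the $\partial_p,\partial_q,\partial_r$ components of $Y$; and the remaining monomials $dx^2,dy^2,dx\,dy,dx\,dz,dt\,dy$ must vanish \emph{identically in} $p,q,r$. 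Splitting these last conditions by the powers of $p,q,r$ forces $\xi^t$ and $\xi^z$ to depend on $(t,z)$ only, makes $\xi^x,\xi^y$ affine in $(x,y)$ with slopes tied to $\partial_t\xi^z$ and $\partial_z\xi^t$, and leaves a single scalar relation from the two expressions for $\mu$. Counting the free functions of $(t,z)$ --- $\xi^t$, $\xi^z$, a common $(x,y)$-scaling, and the two $\partial_x,\partial_y$-translations --- and subtracting this one relation leaves exactly five, which after a choice of basis are $a=\xi^t$, $b=\xi^z$, $c$, $d$, $e$, reproducing $X_1(a),\dots,X_5(e)$ with the stated $\partial_p,\partial_q,\partial_r$ parts.

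I would close the argument with two checks: directly verifying that each $X_i$ preserves (\ref{ASDmetric}) (confirming they are symmetries), and, as the independent cross-check alluded to in the text, prolonging the general point field to $J^2$ and confirming that the determining equations $X^{(2)}(F_i)=\lambda_i^j F_j$ hold on precisely this five-parameter family. The main obstacle is completeness together with the conformal freedom. In the shape-preserving route the delicate step is treating the vanishing conditions as polynomial identities in $p,q,r$: only separating the coefficients of $p,q,r$ in the $dx\,dz$ and $dt\,dy$ components forces the extra $x,y$-independence of $\xi^t,\xi^z$, and overlooking this would yield a spuriously large algebra. In the direct route the difficulty is instead the size of the determining system produced by the nonlinear third equation of (\ref{SDE}) through $m,n$, and the need to carry the multipliers $\lambda_i^j$ --- reflecting that only the conformal class of $g$, not $g$ itself, is preserved --- while checking that no further integrability conditions cut the family below five functions of two variables.
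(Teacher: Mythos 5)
Your forward inclusion is sound and coincides with what the paper establishes in Theorems \ref{Thm:PRshape} and the ``unique lift'' subsection: a vector field on $M$ preserving the PR-shape, lifted to $\mathcal{C}_M^{\text{PR}}$ by $L_{\hat X}c_{\text{PR}}=0$, is a symmetry of $\mathcal{SDE}$ because (anti-)self-duality is a diffeomorphism- and conformally-invariant property and the flows are orientation-preserving. Your sketch of the shape-preserving computation (six conditions from the monomials $dx^2,dy^2,dx\,dy,dx\,dz,dt\,dy$ plus the matching of the $dt\,dx$ and $dz\,dy$ coefficients, solved identically in $p,q,r$) is also consistent with the paper's proof of Theorem \ref{Thm:PRshape}.

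The gap is in your converse. A point symmetry of $\mathcal{SDE}$ is an arbitrary vector field on the seven-dimensional space $J^0=M\times\mathbb{R}^3(p,q,r)$ whose prolongation is tangent to $\mathcal{SDE}_2$; a priori it need not be projectable to $M$ (its base components may depend on $p,q,r$), and even if projectable, its vertical part need not coincide with the geometric action of the projected diffeomorphism on conformal structures --- a symmetry of the equation merely permutes the (very large) solution set and carries no a priori geometric meaning. Your phrase ``$(p,q,r)$ are precisely the conformal data in this gauge, so that a point symmetry descends to a shape-preserving diffeomorphism'' assumes exactly what must be proved; the paper itself presents the coincidence of $\mathrm{sym}(\mathcal{SDE})$ with the lifted shape-preserving algebra as a nontrivial outcome of computation (``an unexpected remarkable fact''), not as a consequence of gauge-theoretic generalities. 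Consequently the step you relegate to a ``cross-check'' --- writing a general point field in all seven variables, prolonging to $J^2$, and solving the full determining system $X^{(2)}(F_i)=\lambda_i^jF_j$ --- is in fact the paper's entire proof and the only place where completeness (no symmetries beyond the five families) is established; it cannot be optional. A minor point: the multipliers $\lambda_i^j$ encode tangency to the submanifold $\{F_i=0\}$ and would appear for any PDE system; they are not a reflection of the residual conformal freedom.
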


The following table shows the commutation relations.
\begin{table}[ht]
\makebox[\textwidth][c]{
\def\arraystretch{1.1}
\tiny
\begin{tabular}{|c||c|c|c|c|c|}
\hline
$[,]$ & $X_1(g)$ & $X_2(g)$ & $X_3(g)$ & $X_4(g)$ & $X_5(g)$ \\\hline \hline
$X_1(f)$ & $X_1(f g_t-f_t g)$ & $X_2(f g_t)-X_1(f_z g)$ & $X_3(f g_t)$ &$X_4((f g)_t)+X_5(f_z g)$ & $X_5(f g_t)$ \\\hline
$X_2(f)$ & $*$ & $X_2(f g_z-f_z g)$&$ X_3(f g_z)$ & $X_4(f g_z)$ & $X_4(f_t g)+X_5((fg)_z)$ \\\hline
$X_3(f)$ &$*$&$*$&$0$ & $-X_4(fg)$ & $-X_5 (fg)$\\\hline
$X_4(f)$ &$*$&$*$&$*$& $0$ & $0$\\\hline
$X_5(f)$ &$*$&$*$&$*$&$*$& $0$\\\hline
\end{tabular}
}
\end{table}

Notice that the Lie algebra is bi-graded $\sym= \oplus \mathfrak g_{i,j}$, meaning that  $[\mathfrak g_{i_1,j_1},\mathfrak g_{i_2,j_2}] \subset \mathfrak g_{i_1+i_2,j_1+j_2}$ with nontrivial graded pieces \[\mathfrak g_{0,0} = \langle X_1,X_2 \rangle, \qquad \mathfrak g_{0,1} = \langle X_3 \rangle, \qquad \mathfrak g_{1,\infty} = \langle X_4,X_5 \rangle.\]

%We notice that $\langle X_1,X_2\rangle$ is a subalgebra while $\langle X_3,X_4,X_5\rangle$ and $\langle X_4,X_5\rangle$ are ideals.

\subsection{Shape-preserving transformations}
We say that a transformation $\varphi \in \text{Diff}_\text{loc}(M)$ preserves the PR-shape if for every $[g] \in \Gamma(\mathcal{C}_M^{\text{PR}})$ we have $[\varphi_* g] \in \Gamma(\mathcal{C}_M^{\text{PR}})$. A vector field $X$ on $\mathbb R^4$ preserves the PR-shape if its flow does so.

 \begin{theorem}\label{Thm:PRshape}
The Lie algebra of vector fields preserving the PR-shape is generated by the five classes of vector fields
 \begin{gather*}
a \partial_t-x a_t \partial_x-x a_z \partial_y,\ \
b \partial_z-y b_t \partial_x-y b_z \partial_y,\ \
c x \partial_x+c y \partial_y,\ \ d \partial_x,\ \ e \partial_y.
\end{gather*}
 %
 % \begin{align*}
 % &a \partial_t-x a_t \partial_x-x a_z \partial_y \\
 % &b \partial_z-y b_t \partial_x-y b_z \partial_y \\
 % &c x \partial_x+c y \partial_y\\
 % &d \partial_x\\
 % &e \partial_y
 % \end{align*}
where $a,b,c,d,e$ are arbitrary functions of $(t,z)$.
 \end{theorem}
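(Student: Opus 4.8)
The plan is to realize the PR-shape as a set of conformally invariant pointwise conditions on the metric and then linearize. Among all conformal structures on $M=\R^4(t,x,y,z)$, the form (\ref{ASDmetric}) is singled out by six conditions on the symmetric tensor $g=g_{ij}\,dx^i dx^j$: the vanishing of the five components $g_{xx},g_{yy},g_{xy},g_{xz},g_{ty}$ together with the equality $g_{tx}=g_{zy}$. All six are invariant under rescaling $g\to e^{2\vp}g$, and they cut the $9$-dimensional fibre of $\mathcal{C}_M$ down to the $3$-dimensional PR-fibre parametrized by $(p,q,r)$. First I would record that the complementary components $g_{tt}=p$, $g_{tz}=q$, $g_{zz}=r$ are exactly the free PR-directions.

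A vector field $X=A\partial_t+B\partial_x+C\partial_y+D\partial_z$ preserves the PR-shape precisely when, for \emph{every} metric $g$ of the form (\ref{ASDmetric}), the Lie derivative $\mathcal{L}_X g$ is tangent to the space of PR conformal structures at $[g]$, i.e.\ lies in the span of $dt^2,dt\,dz,dz^2$ (the deformations $\dot p,\dot q,\dot r$) together with $g$ itself (the infinitesimal rescaling $\mu g$). Equivalently, the five components $(\mathcal{L}_X g)_{xx},(\mathcal{L}_X g)_{yy},(\mathcal{L}_X g)_{xy},(\mathcal{L}_X g)_{xz},(\mathcal{L}_X g)_{ty}$ vanish and $(\mathcal{L}_X g)_{tx}=(\mathcal{L}_X g)_{zy}$; the diagonal components $(\mathcal{L}_X g)_{tt},(\mathcal{L}_X g)_{tz},(\mathcal{L}_X g)_{zz}$ impose nothing, merely defining $\dot p,\dot q,\dot r,\mu$. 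I would compute each of the six constrained components from $(\mathcal{L}_X g)_{ij}=X^k\partial_k g_{ij}+g_{kj}\partial_i X^k+g_{ik}\partial_j X^k$. The decisive point is that the shape must be preserved for every section, hence the conditions must hold \emph{identically in} $p,q,r$; since the computed components carry terms proportional to $p,q,r$ (such as $r\,D_x$, $q\,A_x$, $p\,A_y$), I demand the coefficients of $p,q,r$ vanish separately.

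This separation yields the linear system
\[
A_x=A_y=D_x=D_y=0,\qquad C_x=-A_z,\qquad B_y=-D_t,\qquad A_t+B_x=C_y+D_z.
\]
Integrating, $A$ and $D$ depend only on $(t,z)$, while $B=-D_t\,y+\beta x+B_0$ and $C=-A_z\,x+\gamma y+C_0$ with $\beta,\gamma,B_0,C_0$ a priori functions of the remaining variables. Substituting into $A_t+B_x=C_y+D_z$ and separating variables forces $\beta,\gamma,B_0,C_0$ to depend on $(t,z)$ alone and imposes the single relation $\beta-\gamma=D_z-A_t$. The solution space is thus a free module of rank five over functions of $(t,z)$. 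I would finish by exhibiting the isomorphism with the five listed classes: set $a=A$, $b=D$, $d=B_0$, $e=C_0$, and $c=\beta+A_t=\gamma+D_z$ (the relation guarantees this is well-defined), which recovers exactly $X_1(a),\dots,X_5(e)$ and shows the decomposition is unique.

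The main obstacle is not the integration, which is routine, but the conformal bookkeeping in setting up the six invariant conditions correctly and confirming that the rescaling freedom $\mu$ eliminates all constraints coming from the $(tt),(tz),(zz)$ slots. Equally delicate is extracting the complete constraint set from the "for all $p,q,r$" requirement and then checking, during integration, that the variable-separation in the last equation genuinely forces $\beta,\gamma$ to be functions of $(t,z)$ only; once these two points are secured the identification with the five generators is immediate.
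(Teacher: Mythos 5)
Your proposal is correct and takes essentially the same route as the paper: the paper likewise imposes $L_X g=\epsilon\,(dt\,dx+dz\,dy)+\tilde p\,dt^2+2\tilde q\,dt\,dz+\tilde r\,dz^2$ for a general field $X$, which is exactly your six conformally invariant component conditions, and solves the resulting overdetermined system of 6 PDEs on 4 unknowns. You merely make explicit the separation of coefficients in $p,q,r$ and the integration that the paper leaves implicit, and these details check out.
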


\begin{proof}
In order to find the Lie algebra of vector fields preserving the shape of $[g]$, we let $X = f_1 \partial_t+f_2 \partial_x+f_3 \partial_y+f_4 \partial_z$ be a general vector field and take the Lie derivative $L_X g$. The vector field preserves the PR-shape of $[g]$ if
 \[
L_X g = \epsilon \cdot (dt dx+dz dy) + \tilde p\,dt^2+2 \tilde q\,dt dz+\tilde r\,dz^2
 \]
for some functions $\epsilon,\tilde p,\tilde q, \tilde r$. This gives an overdetermined system of 6 PDEs on 4 unknowns with the solutions parametrized by 5 functions of 2 variables as indicated.
\end{proof}

\subsection{Unique lift to $J^0$}
The conformal metric (\ref{ASDmetric}) can also be considered as a horizontal (degenerate) symmetric tensor $c_{\text{PR}}$ on $\mathcal{C}_M^{\text{PR}}$. Namely, $c_{\text{PR}}\in\Gamma(\pi^*S^2T^*M/\mathbb{R}_+)$ is given at the point $(t,x,y,z,p,q,r)\in\mathcal{C}_M^\text{PR}$ via its representative $g$ by formula (\ref{ASDmetric}).
The algebra of vector fields $X$ preserving the shape of $[g]$ is naturally lifted to $\mathcal{C}_M^{\text{PR}}$ by the requirement $L_{\hat X} c_\text{PR} =0$. This requirement algebraically restores the vertical components
of the vector fields $X_1,\dots,X_5$ from Theorem \ref{Thm:PRshape} yielding the symmetry fields from
Theorem \ref{Thm:Symm}. We conclude:

 \begin{theorem}
The Lie algebra of transformations preserving the PR-shape coincides with
the Lie algebra $\mathfrak{g}$ of point symmetries of $\mathcal{SDE}$.
 \end{theorem}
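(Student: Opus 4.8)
The plan is to establish the coincidence of the two Lie algebras by showing mutual inclusion, exploiting the fact that both have already been computed explicitly in Theorems \ref{Thm:Symm} and \ref{Thm:PRshape}. The cleanest route is to use the unique-lift construction described just above the statement: given any vector field $X$ on $M$ preserving the PR-shape, the requirement $L_{\hat X}c_{\text{PR}}=0$ determines a unique lift $\hat X$ to $\mathcal{C}_M^{\text{PR}}$, and I would show that this lift is precisely a symmetry of $\mathcal{SDE}$.

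\emph{Step 1 (horizontal parts agree).} First I would observe that the five classes of vector fields in Theorem \ref{Thm:Symm} and the five in Theorem \ref{Thm:PRshape} have \emph{identical} horizontal ($\partial_t,\partial_x,\partial_y,\partial_z$) components, parametrized by the same five functions $a,b,c,d,e$ of $(t,z)$. Since a symmetry of $\mathcal{SDE}$ projects to a point transformation of $M$ (by the Lie--B\"acklund theorem, as noted in the excerpt, all symmetries are prolongations of point transformations), and since the conformal structure $[g]$ it encodes must remain in PR-form, the projection of any $\mathfrak{g}$-field to $M$ preserves the PR-shape. This gives the inclusion of $\mathfrak{g}$ (projected) into the shape-preserving algebra, and the explicit lists show the two parametrizing function-spaces coincide.

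\emph{Step 2 (vertical parts are forced).} The content of the theorem is that the vertical ($\partial_p,\partial_q,\partial_r$) components are uniquely pinned down and agree. Here I would invoke the lift construction directly: the equation $L_{\hat X}c_{\text{PR}}=0$ is linear and algebraic in the unknown vertical components, and solving it recovers exactly the $\partial_p,\partial_q,\partial_r$ coefficients appearing in Theorem \ref{Thm:Symm}. Thus every shape-preserving field on $M$ lifts to a unique field on $\mathcal{C}_M^{\text{PR}}$ whose vertical part matches a $\mathfrak{g}$-generator. Conversely, any symmetry of $\mathcal{SDE}$ preserves $c_{\text{PR}}$ up to conformal rescaling (it must send PR-metrics to PR-metrics), hence satisfies the same lift equation and has the same vertical components. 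Matching the two explicit lists of generators then closes the argument in both directions.

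\emph{The main obstacle} I anticipate is the conceptual point in Step 2, namely justifying that the lift equation $L_{\hat X}c_{\text{PR}}=0$ has a \emph{unique} solution and that this solution genuinely lands in $\mathfrak{g}$ rather than merely in some larger algebra of symmetries of the underdetermined ASD system before imposing (\ref{SDE}). One must check that the vertical completion does not introduce extra freedom: because $c_{\text{PR}}$ is a horizontal degenerate tensor whose nondegenerate block $dt\,dx+dz\,dy$ is fixed, the off-diagonal conformal factor $\epsilon$ and the three functions $\tilde p,\tilde q,\tilde r$ from the proof of Theorem \ref{Thm:PRshape} determine $\partial_p,\partial_q,\partial_r$ uniquely via $L_{\hat X}c_{\text{PR}}=0$, with no residual gauge. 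A careful but routine verification that the resulting fields are tangent to $\mathcal{SDE}_2$ (equivalently, that they already appear in Theorem \ref{Thm:Symm}) then completes the identification; this last tangency check is computational but follows from the naturality of the self-duality condition under diffeomorphisms.
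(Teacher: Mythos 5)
Your proposal is correct and follows essentially the same route as the paper: the paper likewise obtains the theorem by lifting the shape-preserving fields of Theorem \ref{Thm:PRshape} to $\mathcal{C}_M^{\text{PR}}$ via the condition $L_{\hat X}c_{\text{PR}}=0$, which algebraically restores the vertical components, and then identifying the result with the explicit generators of Theorem \ref{Thm:Symm}. Your added care about uniqueness of the lift and tangency to $\mathcal{SDE}_2$ only makes explicit what the paper leaves to the direct comparison of the two computed lists.
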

Thus the conformal structure $c_{\text{PR}}$ uniquely restores $\sym=\text{sym}(\mathcal{SDE})$.

\subsection{Conformal tensors invariant under $\sym$}\label{ss:confinv}
The goal of this subsection is to show that the simplest conformally invariant tensor with respect to $\sym$ is $c_{PR}$, so that the conformal structure (of PR-shape) is in turn uniquely determined by $\sym$.

We aim to describe the horizontal conformal tensors on $\mathcal{C}_M^\text{PR}$ that are invariant with respect to $\sym$. Since $\sym$ acts transitively on $\mathcal{C}_M^\text{PR}$, we consider the stabilizer $\text{St}_0 \subset \sym$ of the point given by $(t,x,y,z,p,q,r)=(0,0,0,0,0,0)$ in $\mathcal{C}_M^\text{PR}$.
Denote by $\text{St}^k_0$ the subalgebra of $\g$ consisting of fields vanishing at $0$ to order $k$,
so that $\text{St}_0=\text{St}^1_0$.

It is easy to see from formulae of Theorem \ref{Thm:Symm} that the space $\text{St}^1_0/\text{St}^2_0$
is 18-dimensional, and 12 of the generators are vertical (belong to $\langle\p_p,\p_q,\p_r\rangle$).
The complimentary linear fields have the horizontal parts
 \begin{align*}
Y_1=t \partial_t-x \partial_x, \qquad & Y_2= z \partial_t -x \partial_y, \qquad Y_3= t \partial_z-y\partial_x, \\
Y_4 = z \partial_z-y \partial_y, \qquad & Y_5=x \partial_x  + y \partial_y, \qquad Y_6= z \partial_x-t \partial_y.
 \end{align*}
They form a 6-dimensional Lie algebra $\mathfrak{h}$ acting on the horizontal space
$\mathbb{T}=T_0M=T_0\mathcal{C}_M^\text{PR}/\op{Ker}(d\pi)$. This Lie algebra is a semi-direct product of the reductive part
$\mathfrak{h}_0=\langle Y_1,Y_2,Y_3,Y_4,Y_5\rangle$ and the nilpotent piece $\mathfrak{r}=\langle Y_6\rangle$
(the nilradical is 2-dimensional). The reductive piece splits in turn
$\mathfrak{h}_0=\mathfrak{sl}_2\oplus\mathfrak{a}$, where the semi-simple part is
$\mathfrak{sl}_2=\langle Y_1-Y_4,Y_2,Y_3\rangle$ and the Abelian part is
$\mathfrak{a}=\langle Y_1+Y_4,Y_5\rangle$.

It is easy to see that the space $\mathbb{T}$ is $\mathfrak{h}_0$-reducible.
In fact, with respect to $\mathfrak{h}_0$ it is decomposable $\mathbb{T}=\Pi_1 \oplus \Pi_2 =\langle \partial_t, \partial_z \rangle \oplus \langle \partial_x,\partial_y \rangle$, and $\Pi_1,\Pi_2$ are the standard
$\mathfrak{sl}_2$-representations (denoted by $\Pi$ in what follows).
However $\mathfrak{r}$ maps $\Pi_1$ to $\Pi_2$ and $\Pi_2$ to 0.
This $\Pi_2\subset\mathbb{T}$ is an $\mathfrak{h}$-invariant subspace, but it does not have an
$\mathfrak{h}$-invariant complement.

Moreover, $\Pi_2$ is the only proper $\mathfrak{h}$-invariant subspace, so there are no conformally invariant
vectors (invariant 1-space) and covectors (invariant 3-space). We sumarize this as follows.
 \begin{lem}
There are no horizontal 1-tensors on $\mathcal{C}_M^\text{PR}$ that are
conformally invariant with respect to $\mathfrak{g}$.
 \end{lem}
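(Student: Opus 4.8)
The plan is to translate the claim about horizontal conformal 1-tensors into a purely Lie-algebraic statement about the isotropy representation $\mathfrak{h}$ acting on $\mathbb{T}$ and its dual, using the decomposition already established in the text. A conformally invariant horizontal 1-tensor (either a vector or a covector, since these are the two types of 1-tensors) corresponds to a 1-dimensional $\mathfrak{h}$-invariant subspace of $\mathbb{T}$ (for a vector) or a 3-dimensional $\mathfrak{h}$-invariant subspace of $\mathbb{T}$ (equivalently, a 1-dimensional invariant subspace of $\mathbb{T}^*$, for a covector). Here ``conformally invariant'' means invariant only up to scale, i.e. the line is preserved; this is exactly an invariant 1-space or 3-space. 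So the lemma reduces to: the only proper $\mathfrak{h}$-invariant subspace of $\mathbb{T}$ is $\Pi_2$, which is 2-dimensional, and dually $\mathbb{T}^*$ has no invariant line either.

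First I would record the action of $\mathfrak{h}$ on $\mathbb{T}=\Pi_1\oplus\Pi_2$ explicitly from the generators $Y_1,\dots,Y_6$, noting that $\mathfrak{h}_0=\mathfrak{sl}_2\oplus\mathfrak{a}$ acts on each $\Pi_i$ as the standard 2-dimensional $\mathfrak{sl}_2$-module $\Pi$ (twisted by a character of $\mathfrak{a}$), while the nilpotent generator $Y_6$ sends $\Pi_1$ into $\Pi_2$ and kills $\Pi_2$. The key structural input is that $\Pi$ is an irreducible $\mathfrak{sl}_2$-module, so it contains no invariant line; hence any $\mathfrak{h}$-invariant subspace $W\subset\mathbb{T}$ must be a sum of full copies of $\Pi$ compatible with the nilpotent shift. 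Then I would argue by cases on $\dim W\in\{1,2,3\}$: a 1-dimensional $W$ is impossible because it would give an $\mathfrak{sl}_2$-invariant line in the irreducible $\Pi_1\oplus\Pi_2$; a 3-dimensional $W$ is likewise impossible since $\mathfrak{sl}_2$-submodules of $\Pi\oplus\Pi$ have even dimension $0,2,4$; and for $\dim W=2$ one uses the nilpotent $Y_6$ to show that the only invariant copy is $\Pi_2$ itself (the ``diagonal'' copies $\{v+\phi(v)\}$ fail to be $Y_6$-stable unless they coincide with $\Pi_2$).

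The dual statement, ruling out invariant covectors, I would handle symmetrically: an invariant line in $\mathbb{T}^*$ is the same as an invariant 3-dimensional subspace $W\subset\mathbb{T}$ (its annihilator), which the dimension-parity argument above already excludes. Alternatively one transposes the $\mathfrak{h}$-action on $\mathbb{T}^*$ and runs the identical irreducibility-plus-nilpotent argument, observing that on the dual the nilpotent now maps $\Pi_2^*$ into $\Pi_1^*$, so the unique invariant line would have to live in an irreducible $\Pi^*$, a contradiction.

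The main obstacle is the case $\dim W = 2$, where irreducibility of $\Pi$ alone does not finish the job: an abstract $\mathfrak{sl}_2\oplus\mathfrak{a}$-submodule of $\Pi_1\oplus\Pi_2$ could a priori be any graph of an intertwiner $\Pi_1\to\Pi_2$, and there is a one-parameter family of such $\mathfrak{sl}_2$-isomorphisms (scalar multiples of the identification $\Pi_1\cong\Pi_2$). The point is precisely that the character of $\mathfrak{a}$ and especially the nilpotent $Y_6$ break this family, singling out $\Pi_2$ as the only surviving invariant plane. Carrying out this step requires comparing the $\mathfrak{a}$-weights of $\Pi_1$ and $\Pi_2$ and checking $Y_6$-stability of each candidate graph, which is where the genuine computation lies; everything else is formal representation theory of $\mathfrak{sl}_2$.
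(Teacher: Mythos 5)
Your proposal follows essentially the same route as the paper: the paper's argument is precisely the preceding paragraph, which identifies $\mathbb{T}=\Pi_1\oplus\Pi_2$ as a sum of two standard $\mathfrak{sl}_2$-modules with $\mathfrak{r}$ shifting $\Pi_1\to\Pi_2\to0$, and asserts that $\Pi_2$ is the only proper $\mathfrak{h}$-invariant subspace, so there is no invariant line (vector) or hyperplane (covector). Your case analysis on $\dim W\in\{1,2,3\}$ --- parity from $\mathfrak{sl}_2$-isotypicity, and $Y_6$-stability killing the graphs of intertwiners $\Pi_1\to\Pi_2$ --- correctly fills in the details the paper dismisses as ``easy to see.''
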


Now, let's consider conformally invariant horizontal 2-tensors.
Since $c_\text{PR}$ is $\mathfrak{g}$-invariant, we can lower the indices and consider $(0,2)$-tensors.
We have the splitting $\mathbb{T}^*\otimes\mathbb{T}^* = \La^2\mathbb{T}^* \oplus S^2\mathbb{T}^*$.

The symmetric part further splits $S^2(\Pi_1^*\oplus \Pi_2^*) = S^2 \Pi_1^*\oplus (\Pi_1^* \otimes \Pi_2^*) \oplus S^2 \Pi_2^*$. As an $\mathfrak{sl}_2$-representation, this is equal to
$3\cdot S^2\Pi\oplus\Lambda^2\Pi=3\cdot\mathfrak{ad}\oplus\mathbf{1}$,
and the only one trivial piece $\mathbf{1}\subset \Pi_1^*\otimes\Pi_2^*$
(which is also $\mathfrak{h}$-invariant) is spanned by $c_\text{PR}$.
Here $\Pi_1^*=\langle dt,dz \rangle$ and $\Pi_2^*=\langle dx,dy \rangle$.
Thus there are no $\mathfrak{g}$-invariant symmetric conformal 2-tensors except $c_\text{PR}$.

The skew-symmetric part further splits $\La^2(\Pi_1^*\oplus\Pi_2^*)=\La^2\Pi_1^*\oplus(\Pi_1^*\otimes\Pi_2^*)\oplus \La^2 \Pi_2^*$, and as an $\mathfrak{sl}_2$-representation, this is equal to
$S^2\Pi\oplus 3\cdot \La^2\Pi=\mathfrak{ad}\oplus3\cdot\mathbf{1}$. Thus there are three $\mathfrak{sl}_2$-trivial
pieces, and they are $\mathfrak{h}_0$-invariant. However only one of them is $\mathfrak{r}$-invariant,
namely $\La^2\Pi_1^*$ that is spanned by $dz \wedge dt$. Thus we have proved the following statement.

\begin{theorem}
The only conformally invariant symmetric 2-tensor is $c_\text{PR}$. The only conformally invariant skew-symmetric 2-tensor is $dz \wedge dt$.
\end{theorem}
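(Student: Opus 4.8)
The plan is to work at the distinguished point $0\in\mathcal{C}_M^\text{PR}$ and classify the one-dimensional subspaces of $S^2\mathbb{T}^*$ and $\La^2\mathbb{T}^*$ that are invariant under the isotropy action. Since $\sym$ acts transitively on $\mathcal{C}_M^\text{PR}$, a horizontal conformal $2$-tensor is determined by its value at $0$, and this value must span a line that is invariant up to scale under the linearized isotropy $\mathfrak{h}=\langle Y_1,\dots,Y_6\rangle$ acting on $\mathbb{T}=T_0M$ as computed in Subsection~\ref{ss:confinv}. So the whole problem reduces to finding the $\mathfrak{h}$-invariant lines inside $S^2\mathbb{T}^*$ and $\La^2\mathbb{T}^*$, conformal invariance meaning invariance only up to a scaling factor.

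First I would exploit the splitting $\mathbb{T}=\Pi_1\oplus\Pi_2$ with each $\Pi_i\cong\Pi$ the standard $\mathfrak{sl}_2$-module, and decompose both tensor spaces into $\mathfrak{sl}_2$-irreducibles, exactly as recorded above: $3\cdot\mathfrak{ad}\oplus\mathbf{1}$ in the symmetric case and $\mathfrak{ad}\oplus3\cdot\mathbf{1}$ in the skew case. Because $\mathfrak{sl}_2$ is semisimple, any $\mathfrak{h}$-invariant line is in particular $\mathfrak{sl}_2$-invariant, hence lies in the sum of the trivial isotypic components. This singles out one candidate line in the symmetric case — the copy of $\mathbf{1}\subset\Pi_1^*\otimes\Pi_2^*$ spanned by $c_\text{PR}=dt\,dx+dz\,dy$ — and exactly three candidate lines in the skew case: $\La^2\Pi_1^*=\langle dz\we dt\rangle$, $\La^2\Pi_2^*=\langle dx\we dy\rangle$, and the $\mathfrak{sl}_2$-invariant element of the cross term $\Pi_1^*\otimes\Pi_2^*$. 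I would then note that the Abelian part $\mathfrak{a}=\langle Y_1+Y_4,Y_5\rangle$ acts on each of these lines by scaling (immediate from the weight bookkeeping), so that all four survive as candidate $\mathfrak{h}_0$-invariant lines for the conformal, up-to-scale, notion.

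The decisive step, and the one I expect to be the main obstacle, is to impose invariance under the nilpotent piece $\mathfrak{r}=\langle Y_6\rangle$. The subtlety is that the $\mathfrak{h}$-module $\mathbb{T}$ is \emph{not} completely reducible: $Y_6$ maps $\Pi_1\to\Pi_2$ and annihilates $\Pi_2$, so dually $L_{Y_6}$ sends $\Pi_2^*\to\Pi_1^*$ and kills $\Pi_1^*$. Hence $L_{Y_6}$ can push a trivial-$\mathfrak{sl}_2$ line into a different weight space, and each candidate must be checked by hand. One finds $L_{Y_6}(dt\,dx+dz\,dy)=0$, so $c_\text{PR}$ spans an $\mathfrak{h}$-invariant line and is the unique symmetric conformal invariant. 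In the skew case $L_{Y_6}(dz\we dt)=0$, whereas $L_{Y_6}(dx\we dy)$ lands in the cross term $\Pi_1^*\otimes\Pi_2^*$ and $L_{Y_6}$ of the cross-term invariant lands in $\La^2\Pi_1^*$; neither of these two candidate lines is therefore $\mathfrak{r}$-invariant, leaving $\langle dz\we dt\rangle$ as the only surviving skew line. The only genuine computation is this finite check of $L_{Y_6}$ on three explicit $2$-forms, for which it is essential to use the correct $\mathfrak{sl}_2$-invariant representative $dt\we dx+dz\we dy$ of the cross term rather than a superficially plausible alternative.
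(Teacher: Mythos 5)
Your proposal is correct and follows essentially the same route as the paper: decompose $S^2\mathbb{T}^*$ and $\La^2\mathbb{T}^*$ into $\mathfrak{sl}_2$-irreducibles using $\mathbb{T}=\Pi_1\oplus\Pi_2$, restrict attention to the trivial isotypic pieces, and then eliminate candidates by the action of the nilpotent piece $\mathfrak{r}=\langle Y_6\rangle$. The only difference is that you spell out the $L_{Y_6}$ checks explicitly (all of which are correct), whereas the paper merely asserts which trivial pieces are $\mathfrak{r}$-invariant.
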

Since $dz \wedge dt$ is degenerate and does not define a convenient geometry, $c_\text{PR}$ is the simplest $\sym$-invariant conformal tensor.

\subsection{Algebraicity of $\sym$}
We say that the Lie algebra $\sym$ is algebraic if its sheafification is equal to the Lie algebra sheaf of
some algebraic pseudo-group $\mathcal G$ (see definition of an algebraic pseudo-group in \cite{KL2}). Algebraicity of $\sym$ is important because it guarantees, through the global Lie-Tresse theorem \cite{KL2}, existence of rational differential invariants separating generic orbits (by \cite{R} this yields rational quotient of the action on every finite jet-level).

Let $\mathbb{D}_k \subset J^k_{(\theta,\theta)}(\mathcal{C}_M^\text{PR}, \mathcal{C}_M^\text{PR})$ denote the differential group of order $k$ at $\theta \in \mathcal{C}_M^\text{PR}$. The stabilizer $\mathcal G_\theta \subset \mathcal G$ of $\theta$ can be viewed as a collection of subbundles $\mathcal G_\theta^k \subset \mathbb{D}_k$. The transitive Lie pseudo-group $\mathcal G$ is algebraic if $\mathcal G_\theta^k$ is an algebraic subgroup of $\mathbb{D}_k$ for every $k$. This is independent of the choice of $\theta$ since $\mathcal G$ is transitive, implying that subgroups $\mathcal G_\theta^k \subset \mathbb{D}_k$ are conjugate for different points $\theta \in \mathcal{C}_M^\text{PR}$.

When determining whether $\sym$ is algebraic, there are essentially two approaches. One is to try to see it from the stabilizer $\sym_\theta$ alone, and the other is to integrate $\sym$ in order to investigate the pseudo-group $\mathcal G_\theta$. It turns out that the latter is more efficient in our case.

Consider the following pseudo-group $\mathcal G$ given via its action on $\mathcal{C}_M^\text{PR}$,
where $A,B,C,D,E$ are arbitrary functions of $(z,t)$.
\begin{gather*}
  t \mapsto T= A,\quad
  z \mapsto Z=B\\
  x \mapsto X=x \frac{C}{A_t} - y B_t+D, \quad
  y \mapsto Y=y \frac{C}{B_z}-x A_z+E \\
  p \mapsto P=p \frac{C}{A_t^2}- D_t - x C_t+y B_{tt}-2q B_t+x A_{tt}\\
  q \mapsto Q=q \frac{C}{B_z A_t} - \tfrac{1}{2} (E_t+D_z+x C_z+y C_t)  + y B_{tz}-r B_t+x A_{tz}-p A_z \\
  r \mapsto R=r \frac{C}{B_z^2} - E_z-y C_z+y B_{zz}+x A_{zz}-2q A_z
\end{gather*}
It is easy to check that this is a Lie pseudo-group (one should specify the differential equations
defining $\mathcal G$, and they are $T_x=0,\dots,T_r=0,\dots,X_y+Z_t=0,\dots$).
Moreover it is easy to check that the Lie algebra sheaf of $\mathcal G$ coincides with the
sheafification of $\mathfrak{g}$.

\begin{theorem} \label{Algebraicity}
  The Lie pseudo-group $\mathcal G$ and consequently the Lie algebra $\sym$ are algebraic.
\end{theorem}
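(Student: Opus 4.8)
The plan is to establish algebraicity of $\mathcal G$ by verifying that for each $k$ the stabilizer $\mathcal G_\theta^k \subset \mathbb{D}_k$ is an algebraic subgroup, using the explicit parametrization of $\mathcal G$ given just before the statement. The key observation is that the action is written \emph{polynomially} in the fiber coordinates $x,y,p,q,r$ and in the jet-variables of the generating functions $A,B,C,D,E$. Concretely, the pseudo-group is parametrized by the five functions $A,B,C,D,E$ of $(z,t)$ together with their derivatives, and the target coordinates $(T,Z,X,Y,P,Q,R)$ are rational (in fact polynomial after clearing the denominators $A_t,B_z$) in all of these. Since the defining differential equations listed in the excerpt ($T_x=0,\dots,X_y+Z_t=0,\dots$) are themselves polynomial relations among jet-variables, the pseudo-group $\mathcal G$ is a Lie pseudo-group defined by a polynomial system, which is exactly the setup of an algebraic pseudo-group in the sense of \cite{KL2}.

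First I would fix the base point $\theta=(0,0,0,0,0,0,0)$ and write down, order by order, the conditions on the jets of $A,B,C,D,E$ that force $\theta\mapsto\theta$; this identifies $\mathcal G_\theta$ with a subvariety cut out in the jet-variables of the generating functions. Next I would compute the $k$-jet of the transformation in $\mathbb{D}_k$ as a function of the jets of $A,B,C,D,E$ up to the appropriate order: because the formulas for $(T,Z,X,Y,P,Q,R)$ are polynomial in the fiber variables and rational (with denominators being powers of $A_t,B_z$, which are units on a Zariski-open set) in the derivatives of the generating functions, every entry of the $k$-jet of the diffeomorphism is a rational function of finitely many jet-parameters. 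The image of this parametrization in $\mathbb{D}_k$ is therefore a \emph{constructible} set, and the stabilizer $\mathcal G_\theta^k$ is the image cut out by the basepoint conditions.

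The heart of the argument is upgrading ``constructible image of a polynomial/rational map'' to ``algebraic subgroup.'' I would do this by exhibiting $\mathcal G_\theta^k$ as the common zero locus of polynomial equations in the coordinates of $\mathbb{D}_k$, rather than merely as a parametrized image. The natural route is to use the defining differential equations of $\mathcal G$: restricting the equations $T_x=0,\dots,X_y+Z_t=0,\dots$ to the basepoint and prolonging them to order $k$ yields a finite system of polynomial equations on $\mathbb{D}_k$ whose solution set is precisely $\mathcal G_\theta^k$. Because these equations are polynomial and $\mathcal G_\theta^k$ is closed under the group operation of $\mathbb{D}_k$ (being a stabilizer subgroup), it is a Zariski-closed subgroup, i.e.\ algebraic. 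Transitivity of $\mathcal G$ on $\mathcal{C}_M^\text{PR}$, already noted in the excerpt, then guarantees that this holds at every point via conjugation, so the conjugacy class of $\mathcal G_\theta^k$ consists of algebraic subgroups uniformly in $k$.

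The main obstacle I anticipate is the bookkeeping needed to show that the defining relations, after prolongation, genuinely cut out the image set and nothing larger — in other words, that the constructible image coincides with its Zariski closure intersected with the relevant group. Concretely, one must check that the denominators $A_t,B_z$ do not introduce spurious components and that no implicit integrability constraints among the prolonged equations force $\mathcal G_\theta^k$ to be a proper subvariety of the naively-expected dimension. This is where I would lean on the fact, recorded in the excerpt, that the Lie algebra sheaf of $\mathcal G$ coincides with the sheafification of $\sym$, together with the explicit bi-graded structure of $\sym$ from Theorem \ref{Thm:Symm} and its commutation table: these give a clean handle on the dimensions of the jet-stabilizers and let me match the polynomially-defined variety with the parametrized pseudo-group order by order. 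Once the dimension count and the polynomiality of the defining equations are in place, algebraicity of $\mathcal G$, and hence of $\sym$, follows, and by the global Lie-Tresse theorem \cite{KL2} we obtain the rational differential invariants separating generic orbits.
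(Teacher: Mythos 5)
Your proposal follows essentially the same route as the paper: the explicit rational formulas for $(T,Z,X,Y,P,Q,R)$ in terms of the jets of $A,B,C,D,E$ give, after repeated differentiation and evaluation at $\theta$, a rational parametrization of each $\mathcal G_\theta^k$ as a subvariety of $\mathbb{D}_k$, whence algebraicity of $\mathcal G$ and then of $\sym$. Your extra worry about upgrading a constructible parametrized image to a Zariski-closed subgroup is legitimate but is settled by the standard fact that a constructible subgroup of an algebraic group is closed, so the detour through the prolonged defining equations is not really needed.
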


 \begin{proof}
The subgroups $\mathcal G_\theta^k$ of $\mathbb{D}_k$ are constructed by repeated differentiation of $T,...,R$ by $t,...,r$ and evaluation at $\theta$. The formulas for the group action make it clear that $\mathcal G_\theta^k$ will always be an algebraic subgroup of $\mathbb{D}_k$ (they provide a rational parametrization of it as a subvariety).
Thus $\mathcal G$ is algebraic. The statement for $\g$ follows.
 \end{proof}

\medskip

Let us briefly explain how to read algebraicity from the Lie algebra $\g$. Consider the Lie subalgebra $\mathfrak{f}\subset\mathfrak{gl}(T_0J^0)$ obtained by linearization of the isotopy algebra at
$0\in J^0=\mathcal{C}_M^\text{PR}$.
As already noticed in \S\ref{ss:confinv}, this is an 18-dimensional subalgebra admitting
the following exact 3-sequence
 $$
0\to\mathfrak{v}\longrightarrow\mathfrak{f}\longrightarrow\mathfrak{h}\to0,
 $$
where $\mathfrak{v}$ is the vertical part and $\mathfrak{h}$ -- the "horizontal" (that is the quotient).
The explicit form of these vector fields come from Theorem~\ref{Thm:Symm}:
 \begin{gather*}
\mathfrak{v}=\langle x\p_p,x\p_q,x\p_r,y\p_p,y\p_q,y\p_r,t\p_p,t\p_q,t\p_r,z\p_p,z\p_q,z\p_r\rangle,\\
\mathfrak{h}=\mathfrak{sl}_2+\mathfrak{a}+\mathfrak{r},\ \text{ where }\quad
\mathfrak{r}=\langle z\p_x-t\p_y\rangle,\\
\mathfrak{sl}_2=\langle z\p_t-x\p_y-p\p_q-2q\p_r, t\p_z-y\p_x-2q\p_p-r\p_q,\\
\hspace{82pt} t\p_t-z\p_z-x\p_x+y\p_y-2p\p_p+2r\p_r \rangle,\\
\mathfrak{a}=\langle t\p_t+z\p_z-p\p_p-q\p_q-r\p_r,x\p_x+y\p_y+p\p_p+q\p_q+r\p_r\rangle.
 \end{gather*}
By \cite{CT} the subalgebra $[\mathfrak{f},\mathfrak{f}]\subset\mathfrak{gl}(T_0J^0)$ is algebraic.
Since $\mathfrak{f}$ is obtained from
$[\mathfrak{f},\mathfrak{f}]=\mathfrak{v}+\mathfrak{sl}_2+\mathfrak{r}$ by extension by derivations $\mathfrak{a}$,
and the semi-simple elements in the latter have no irrational ratio of spectral values,
we conclude that $\mathfrak{f}\subset\mathfrak{gl}(T_0J^0)$ is an algebraic Lie algebra \cite{Ch}.
The claim about algebraicity of $\g$ follows by prolongations.

\section{Hilbert polynomial and Poincar\'e function for $\mathcal{SDE}$}\label{S6}

Even though $\sym$ is just a PR-shape preserving Lie algebra, its prolongation to the space of 2-jets
preserves $\mathcal{SDE}$ (this is an unexpected remarkable fact), and we consider the orbits of $\sym$ on this equation.

%The symmetry algebra $\sym$ also acts on solutions of $\mathcal{SDE}$, and an orbit in the solution space correspond to an equivalence class of conformal structures of Pleba\'nski-Robinson form.

\subsection{Dimension of generic orbits}
We can compute the dimension of a generic orbit in $\mathcal{SDE}_k$ or $J^{k}$ by computing the rank of the system of prolonged symmetry vector fields $X^{(k)}$ at a point in general position.

By prolonging the generators $X_1,...,X_5$ and with the help of Maple we observe that the Lie algebra $\sym$ acts transitively on $J^1$. The dimension of a generic orbit on the Lie algebra acting on $J^2$ is 44, but the equation $\mathcal{SDE}_2 \subset J^2$ contains no generic orbits, and if we restrict to $\mathcal{SDE}_2$ a generic orbit of $\sym$ is of dimension 42. For higher jet-orders $k>2$, the dimension of a generic orbit is the same on $\mathcal{SDE}_k$ as on $J^{k}$.

We are going to compute $\dim\mathcal O_k$ for $k \geq 3$ as follows. Since $\sym$ contains the translations $\partial_t, \partial_z$, all its orbits pass through the subset $S_k \subset J^k$ given by $t=0,z=0$. On $S_k$ we can make the Taylor expansion of parametrizing functions $a,b,c,d,e$ around $(t,z)=(0,0)$.

We use $X_5(e)$ to show the idea. By varying the coefficients of the Taylor series $e(t,z)=e(0,0)+e_t(0,0)t+e_z(0,0)z+\cdots $ we see that the vector fields $X_5 (m,n) = z^m t^n \partial_y- \tfrac{n}{2} z^m t^{n-1} \partial_q - mz^{m-1} t^n \partial_r$  are contained in the symmetry algebra, with the convention that $t^{-1}=z^{-1}=0$, and any vector field of the form $X_5(e)$ is tangent to a vector field in $\langle X_5 (m,n)\rangle$. The prolongation of a vector field takes the form
 \begin{equation}\label{eq:prolongation}
X^{(k)} = \sum_i a_i\D_i^{(k+1)}+
\sum_{|\sigma| \leq k} (\D_\sigma (\phi_p) \partial_{p_\sigma}+\D_\sigma (\phi_q) \partial_{q_\sigma}+\D_\sigma (\phi_r) \partial_{r_\sigma})
 \end{equation}
% \begin{equation}\label{eq:prolongation}
% \begin{array}{rl}
%X^{(k)} &= \sum_{|\sigma| \leq k} (\D_\sigma (\phi_p) \partial_{p_\sigma}+\D_\sigma (\phi_q) %\partial_{q_\sigma}+\D_\sigma (\phi_r) \partial_{r_\sigma})\\
%\vphantom{\dfrac{A^A_A}{A^A_A}}
%& - \sum_{i=1}^4 (\partial_{p_i} (\phi_p)+\partial_{q_i} (\phi_q)+\partial_{r_i} (\phi_r)) \D_i^{(k+1)}
% \end{array}
% \end{equation}
where $\D_\sigma$ is the iterated total derivative, $\D_i^{(k+1)}$ the truncated total
derivative (``restriction" to the space $J^{k+1}$, cf.\ \cite{KLV,KL1}), 
$a_i=dx_i(X)$ for $(x_1,x_2,x_3,x_4)=(t,x,y,z)$, and
$\phi_p, \phi_q, \phi_r$ are the generating functions for $X$, i.e.\ 
$\phi_p = \omega_p(X), \phi_q = \omega_q(X), \phi_r=\omega_r(X)$ where
\begin{align*}
  \omega_p &= dp-p_t dt-p_x dx-p_y dy-p_z dz, \\
  \omega_q &= dq-q_t dt-q_x dx-q_y dy-q_z dz, \\
  \omega_r &= dr-r_t dt-r_x dx-r_y dy-r_z dz
\end{align*}
In the case of $X_5(m,n)$, the generating functions are given by
 $$
\phi_p = - p_y z^m t^n, \
\phi_q = -\tfrac{n}{2} z^m t^{n-1} - q_y z^m t^n, \
\phi_r = -m z^{m-1} t^n - r_y z^m t^n .
 $$
We see that the restriction of $X_5(m,n)^{(k)}$ to the fiber over $0 \in\mathcal{C}_M^\text{PR}$
is nonzero only when  $m+n \leq k+1$. Hence we can parametrize $\langle X_5 (m,n)\rangle^{(k)}$ by $J_0^{k+1} (\mathbb R^2(t,z), \mathbb R(e))$, and by extending this argument to the whole symmetry algebra we get
(the vector fields $X_k(m,n)$ for $k=1,\dots,4$, are defined similarly to the vector field $X_5(m,n)$ by simply
substituting $a=z^mt^n$ etc into the formulae of Theorem \ref{Thm:Symm})
\begin{align*}
\sym^{(k)} &= \langle X_1(m,n), X_2(m,n),X_4(m,n),X_5(m,n)\rangle^{(k)} \oplus \langle X_3(m,n) \rangle^{(k)} \\ &=J_0^{k+1}(\mathbb R^2(t,z), \mathbb R^4(a,b,d,e)) \times J_0^k(\mathbb R^2(t,z),\mathbb R(c)).
\end{align*}

Using formula (\ref{eq:prolongation}) we verify that the Lie algebra $\sym^{(k)}$ acts freely
on $\mathcal{SDE}_k$ for $k \geq 3$, whence
\begin{align*}
\dim \mathcal O_k &= \dim\left(J_0^{k+1}(\mathbb R^2, \mathbb R^4) \times J_0^k(\mathbb R^2,\mathbb R)\right) \\&= 4\dim \left(J_0^{k+1}(\mathbb R^2, \mathbb R)\right)+\dim\left( J_0^k(\mathbb R^2,\mathbb R)\right) \\
&=4 \binom{k+3}{2}+\binom{k+2}{2}= \frac{(k+2)(5k+13)}{2}.
\end{align*}

\subsection{Counting the differential invariants}

The number $s_k$ of differential invariants of order $k$ (as before, this is $\op{trdeg}\mathfrak{F}_k$)
is equal to the codimension of a generic orbit of $\sym$ on $\mathcal{SDE}_k$. For the lowest orders, we have $s_0=s_1=0$ and $s_2= \dim \mathcal{SDE}_2-\dim \mathcal O_2 = 46-42=4$. For higher jet-orders, the number of invariants of order $k$ is given by \[s_k = \text{codim} \mathcal O_k = \dim \mathcal{SDE}_k - \dim \mathcal O_k = k^3+2k^2-5k-6, \quad k\geq 3.\]

The number of differential invariants of ``pure order'' $k$ is then given by $H(k)=s_k-s_{k-1}$.
The Poincar\'e function $P(z)=\sum_{k=0}^\infty H(k)z^k$ can now easily be computed, and we conclude:
\begin{theorem}
The Hilbert polynomial for the action of $\sym$ on $\mathcal{SDE}$ is
\begin{equation*}
H(k)=\left\{\begin{array}{ll}
0 & \text{ for }k<2,\\
4 & \text{ for }k=2,\\
20 & \text{ for }k=3,\\
3k^2+k-6 & \text{ for }k>3.
\end{array}\right.
\end{equation*} 	
The corresponding Poincar\'e function is equal to
\[P(z) = \frac{2 z^2 (2+4z-z^2-4z^3+2z^4)}{(1-z)^3}.\]
\end{theorem}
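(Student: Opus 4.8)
The plan is to read the theorem off the dimension counts already assembled in this section, so the proof reduces to two finite computations: the piecewise Hilbert function and the summation of its generating series. The key input is the formula for the number $s_k=\op{trdeg}\mathfrak{F}_k=\op{codim}\mathcal O_k=\dim\mathcal{SDE}_k-\dim\mathcal O_k$ of order-$k$ invariants. Since it was already shown that $\sym^{(k)}$ acts freely on $\mathcal{SDE}_k$ for $k\ge3$ and is there isomorphic to $J_0^{k+1}(\R^2,\R^4)\times J_0^k(\R^2,\R)$, one has $\dim\mathcal O_k=4\binom{k+3}{2}+\binom{k+2}{2}=\tfrac12(k+2)(5k+13)$, which combined with the determined-system count $\dim\mathcal{SDE}_k=k^3+\tfrac92k^2+\tfrac{13}2k+7$ gives $s_k=k^3+2k^2-5k-6$ for $k\ge3$; the exceptional small orders $s_0=s_1=0$ and $s_2=4$ were found directly. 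First I would simply record these values of $s_k$.

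Next I would compute $H(k)=s_k-s_{k-1}$ range by range. For $k>3$ both terms equal the cubic and the $k^3$ cancels, leaving $H(k)=3k^2+k-6$ after collecting the lower-degree terms. The two transitional orders need the piecewise definition of $s_k$: here $H(2)=s_2-s_1=4$ and $H(3)=s_3-s_2=24-4=20$, with $H(k)=0$ for $k<2$. This reproduces the claimed four-case formula.

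For the Poincar\'e function $P(z)=\sum_{k\ge0}H(k)z^k$ I would use the standard identities $\sum_{k\ge0}z^k=(1-z)^{-1}$, $\sum_{k\ge0}kz^k=z(1-z)^{-2}$, $\sum_{k\ge0}k^2z^k=z(1+z)(1-z)^{-3}$. Summing $3k^2+k-6$ against these produces $(-6+16z-4z^2)(1-z)^{-3}$, i.e.\ the series one would obtain if $H(k)=3k^2+k-6$ held for every $k\ge0$. Since the true coefficients at $k=0,1,2,3$ are $0,0,4,20$ rather than the polynomial values $-6,-2,8,24$, I would add the correction polynomial $6+2z-4z^2-4z^3$ formed from the differences and combine over the denominator $(1-z)^3$. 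Expanding $(6+2z-4z^2-4z^3)(1-z)^3$ and cancelling against $-6+16z-4z^2$ leaves the numerator $4z^2+8z^3-2z^4-8z^5+4z^6=2z^2(2+4z-z^2-4z^3+2z^4)$, which is exactly the asserted $P(z)$.

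The only real obstacle is combinatorial rather than conceptual: keeping the exceptional orders $k=2,3$ consistent between the Hilbert-function formula and the Poincar\'e function. Because $3k^2+k-6$ does not reproduce the true values at these orders, a direct substitution into the generating-function identities returns a wrong numerator, and all the care lies in the finite correction terms. Everything else is routine, and as a sanity check the resulting pole of order $3$ at $z=1$ matches the expected minimal number of invariant derivations in a Lie--Tresse generating set.
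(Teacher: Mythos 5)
Your proposal is correct and follows exactly the paper's own route: the theorem is obtained by taking $H(k)=s_k-s_{k-1}$ from the values $s_0=s_1=0$, $s_2=4$, $s_k=k^3+2k^2-5k-6$ ($k\ge3$) established in the preceding subsections, and then summing the generating series with the finite corrections at low order. All of your arithmetic (including the intermediate numerator $-6+16z-4z^2$ and the correction polynomial $6+2z-4z^2-4z^3$) checks out against the stated $H(k)$ and $P(z)$.
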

Notice that $H(k)$ in this statement has the same leading term as $H(k)$ in Theorem \ref{HandP} for $k>3$.

The following table summarizes the counting results from the last two subsections for low order $k$.

\begin{table}[h]
\def\arraystretch{1.1}
\begin{tabular}{|l||r|r|r|r|r|r|r|r|r|}
\hline
$k$ & 0 & 1 & 2 & 3 & 4 & 5 & 6 & 7 & \dots \\\hline \hline
$\dim \mathcal{SDE}_k$ & 7 & 19 & 46 & 94 & 169 & 277 & 424 & 616 & \dots \\ \hline
$\dim \mathcal O_k$ & 7 & 19 & 42 & 70 & 99 & 133 & 172 & 216 & \dots \\ \hline
$\op{codim}\mathcal O_k$ & 0 & 0 &  4 & 24 & 70 & 144 & 252 & 400 & \dots \\ \hline
$H(k) $ & 0 & 0 & 4 & 20 & 46 & 74 & 108 & 148  & \dots \\ \hline
\end{tabular}
\end{table}

\section{The invariants of $\mathcal{SDE}$ and the quotient equation}\label{S7}

%One of the strengths of this approach, compared to the more general approach in the first part of the paper,
%is that we are able to compute the rational expressions for the invariants.

From the global Lie-Tresse theorem \cite{KL2} and Theorem \ref{Algebraicity} it follows that there exist
rational differential invariants of $\g$-action (or $\mathcal{G}$-action) on $\mathcal{SDE}$
that separate generic orbits.

\subsection{Invariants of the second order}
There are four independent differential invariants of the second order:
	\begin{align*}
	I_1 &= \frac{1}{K} \left(	2\,p_{xy}q_{xx}+p_{{yy}}r_{xx}+4\,q_{xy}^{2}+2\,q_{xy}r_{yy}+2\,q_{yy}r_{xy}+r_{yy}^{2}\right)
	\\
	I_2 &= \frac{1}{K^3}  \big( p_{xy}q_{xx}r_{yy}-p_{xy}q_{yy}r_{xx}-p_{yy}q_{xx}r_{xy}+p_{yy}q_{xy}r_{xx} \\ &\quad\; +2\,q_{xy}^{2}r_
	{yy}-2\,q_{xy}q_{yy}r_{xy}+q_{xy}r_{yy}^{2}-q_{{3,
	3}}r_{xy}r_{yy} \big) ^{2}
    \\
	I_3 &= \frac{1}{K^3} \big(  \left(  \left(2\,r_{xy} -2\,q_{xx}
		 \right) p_{xy}+4\,p_{yy}r_{xx}+2\,q_{yy} \left( q_{xx}
		-r_{xy} \right)  \right) q_{xy}\\&\quad\; -4\,q_{xy}^{3}+p_{xy}^{2}r_{xx}-2\,p_{xy}q_{yy}r_{xx}+ \left( q_{xx}-r_{xy} \right) ^{2}p_{yy}+q_{yy}^{2}r_{xx} \big) ^{2}
	\end{align*}
	\begin{align*}
	I_4 &= \frac{1}{K^2} \big(\left( -12\,p_{xy
	}r_{xy}-6\,p_{yy}r_{xx}-12\,q_{yy}q_{xx}+12\,q_{xy}
	^{2} \right) r_{yy}^{2} -3\,r_{yy}^{4}\\& \quad \; + \big( \left(24\,p_{xy} \left( q_{xx}-r_{xy} \right) -12
	\,p_{yy}r_{xx}-24\, q_{{
		yy}} \left( q_{xx}+r_{xy} \right)   \right) q_{xy}\\& \quad \; +48\,q_{xy}^{3}+ 12\, \left(r_{xy}^{2} -q_{xx}^{2}\right) p_{yy}
	+ 12 \,\left( q_{yy}^{2} -p_{xy}^{2}\right) r_{xx}
	 \big) r_{yy}\\& \quad \; + 24\,\left( r_{xy} \left( q_{xx}+r_{
	 	xy} \right) p_{yy}+q_{yy}r_{xx} \left( p_{xy}+q_{yy} \right)  \right) q_{xy}-12\,q_{xy}r_{yy}^{3} \\&\quad \; +3\, \left( 4\, p_{xy}r_{xy}-p_
	{yy}r_{xx} \right)  \left( p_{yy}r_{xx}-4\,q_{yy}q_{xx} \right) \\& \quad \; -24\, \left( p_{yy}r_{xx}+2 \, q_{yy}r_{xy} \right) q_{xy}^{2}
	  \big)
	\end{align*}
where
 $$
K=2\,p_{xy}r_{xy}-p_{yy}r_{xx}+2\,q_{xx}q_{yy}-2\,q_{xy}^{2}+2\,q_{xy}r_{yy}+r_{yy}^{2}
 $$
is a relative differential invariant.

\subsection{Singular set}
Let $\Sigma_2' \subset \mathcal{SDE}_2$ be the set of points $\theta$ where $\langle X_\theta^{(2)} : X \in \sym\rangle \subset T_\theta (\mathcal{SDE}_2)$ is of dimension less than 42. It's given by
 \[
\Sigma'_2 = \{\theta \in \mathcal{SDE}_2 : \text{rank}\,  (\mathcal A|_\theta) < 4\}
 \]
where
\begin{equation*}
\mathcal A = \left(\begin{matrix} 0 & -2 q_{xy}-2r_{yy} & p_{xy}+q_{yy} & 0 \\
0 & 2 p_{xy}-2 q_{yy} & 2 p_{yy} & p_{yy} \\
4 q_{xy}+r_{yy} & -r_{xx} & -2q_{xx} & -2q_{xx} \\
-p_{xy}+q_{yy} & q_{xx}-r_{xy} & 0 & -q_{xy} \\
-p_{yy} & 2q_{xy}-r_{yy} & q_{yy} & 0 \\
-2 q_{xx}+2r_{xy} & 0 & -2 r_{xx} & -3 r_{xx} \\
-2 q_{xy}+r_{yy} & r_{xx} & - r_{xy} & -2 r_{xy} \\
-2 q_{yy} & 2 r_{xy} & 0 & -r_{yy}
\end{matrix} \right).
\end{equation*}
This set contains the singular points that can be seen from a local viewpoint on $\mathcal{SDE}_2$, but there may still be some singular (non-closed) orbits of dimension 42. We use the differential invariants $I_i$ to filter out these. Let $\Sigma_3 \subset \mathcal{SDE}_3$ be the set of points where the 4-form \[\hat dI_1\wedge \hat dI_2 \wedge \hat dI_3 \wedge \hat dI_4\]
is not defined or is zero. Here $\hat d$ is the horizontal differential
\[ \hat d f = \D_t (f) dt+\D_x (f)dx+\D_y (f) dy+\D_z (f)dz.\]
This defines the singular sets $\Sigma_k = (\pi_{k,3}|_{\mathcal{SDE}_k})^{-1}(\Sigma_3) \subset \mathcal{SDE}_k$ and $\Sigma_2 = \pi_{3,2}(\Sigma_3)$. The set $\Sigma_2$ of all singular points in $\mathcal{SDE}_2$ %strictly
contains $\Sigma_2'$.

By using Maple, we can easily verify that $\{K=K_1=K_2=K_3=K_4=0\}$ is contained in $\Sigma_2'$, where $K_i$ is the numerator of $I_i$ for $i=1,2,3,4$. Notice also that 2-jets of conformally flat metrics are contained in $ \Sigma_2'$.

\subsection{Invariants of higher orders}
The 1-forms $\hat dI_1, \hat dI_2, \hat dI_3, \hat dI_4$ determine an invariant horizontal coframe on
$\mathcal{SDE}_3 \setminus \Sigma_3$.
The basis elements of the dual frame $\hat{\p}_{I_1},\hat{\p}_{I_2},\hat{\p}_{I_3},\hat{\p}_{I_4}$ are invariant derivations, the Tresse derivatives.
We can rewrite metric (\ref{ASDmetric}) in terms of the invariant coframe:
\begin{equation}
g = \sum G_{ij} \hat d I_i \hat d I_j,\qquad\text{where} \qquad G_{ij} = g(\hat{\p}_{I_i},\hat{\p}_{I_j}). \label{G}
\end{equation}
Since  the $\hat d I_i$ are invariant, and $[g]$ is invariant, the map \[\hat G = [G_{11}:G_{12}:G_{13}:G_{14}:G_{22}:G_{23}:G_{24}:G_{33}:G_{34}:G_{44}] \colon J^3 \to \mathbb RP^9\]
is invariant. Hence the functions $G_{ij}/G_{44}$ are rational scalar differential invariants (of third order). This has been verified in Maple by differentiation of $G_{ij}/G_{44}$ along the elements of $\sym$. It was also checked that these nine invariants are independent. By the principle of $n$-invariants \cite{ALV}, $I_i$ and $G_{ij}/G_{44}$ generate all scalar differential invariants.

\begin{theorem}
The field of rational differential invariants of $\sym$ on $\mathcal{SDE}$ is generated by the differential invariants $I_k, G_{ij}/G_{44}$ and invariant derivations $\hat{\p}_{I_k}$. The differential invariants in this field separate generic orbits in $\mathcal{SDE}_\infty$.
\end{theorem}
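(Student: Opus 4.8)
The plan is to establish two claims: that the listed invariants (the four second-order $I_k$, the third-order ratios $G_{ij}/G_{44}$, together with the invariant derivations $\hat\p_{I_k}$) generate the field $\mathfrak F$, and that they separate generic orbits in $\mathcal{SDE}_\infty$. The two claims are closely tied: generation of the full field by finitely many invariants plus derivations is exactly the content of the global Lie-Tresse theorem \cite{KL2}, whose hypotheses have been arranged in the preceding sections. Specifically, Theorem \ref{Algebraicity} guarantees that $\mathfrak g$ (equivalently $\mathcal G$) is algebraic, and the freeness of the $\sym^{(k)}$-action on $\mathcal{SDE}_k$ for $k\ge 3$ established in Section \ref{S6} guarantees that generic orbits have the expected codimension and that the invariants form a rational quotient on each finite jet-level. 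So the first thing I would do is invoke \cite{KL2} to conclude abstractly that \emph{some} finite set of rational invariants and invariant derivations generates $\mathfrak F$ and separates generic orbits.

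The real work is to verify that our \emph{explicit} set does the job. First I would confirm that $\hat dI_1,\dots,\hat dI_4$ are horizontally independent off $\Sigma_3$, so that the Tresse derivatives $\hat\p_{I_k}$ are well-defined invariant derivations and $(I_1,\dots,I_4)$ serve as invariant horizontal coordinates; this was checked in Maple in the preceding subsection. Then the principle of $n$-invariants \cite{ALV} applies: once we have $n=4$ functionally independent invariants whose horizontal differentials trivialize the horizontal cotangent bundle, any collection of invariants expressing the remaining geometric data as functions of these coordinates will generate. The metric coefficients $G_{ij}=g(\hat\p_{I_i},\hat\p_{I_j})$ repackage the conformal structure $[g]=c_\text{PR}$ itself in the invariant frame, and because $[g]$ is $\mathfrak g$-invariant and defined only up to scale, the projective class $\hat G\colon J^3\to\mathbb{RP}^9$ is invariant, whence the affine ratios $G_{ij}/G_{44}$ are genuine rational scalar invariants of order $3$. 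The counting data (nine such ratios plus four $I_k$, matching $s_3=24$) together with the verified independence of the nine ratios confirms that these third-order invariants, adjoined to $I_1,\dots,I_4$, exhaust the third-order invariants up to rational equivalence.

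To complete separation of generic orbits, I would argue inductively in jet-order. By freeness of the action for $k\ge 3$, a generic orbit has codimension $s_k$, and the invariants of order $\le k$ obtained by iterated Tresse differentiation $\hat\p_{I_{j_1}}\cdots\hat\p_{I_{j_m}}$ of the generators must, by Lie-Tresse, include $s_k$ functionally independent functions. One verifies that applying the invariant derivations to the order-$3$ generators produces invariants whose differentials fill out the conormal to the orbit at a generic point; equivalently, the map from the generic $k$-jet to the values of all derived invariants of order $\le k$ is a local submersion onto the quotient. Two generic jets with equal invariant values therefore lie in the same $\sym^{(k)}$-orbit, and passing to the limit gives separation in $\mathcal{SDE}_\infty$.

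The main obstacle is the last step: showing that the specific derivations $\hat\p_{I_k}$ applied to the specific generators actually recover \emph{all} higher-order invariants, rather than merely some subalgebra. Abstractly this is forced by the Lie-Tresse theorem, but making it concrete requires knowing that no new transcendence appears beyond what the $n$-invariants produce — precisely the assertion that rational expressions occur at most at the level of $3$-jets, as noted in Section \ref{S2}. I would lean on this stabilization, combined with the exact Hilbert-polynomial count $H(k)=3k^2+k-6$ from the preceding theorem, to certify that the derived invariants match the known number $s_k$ at every order, leaving no room for missing generators. The independence and completeness at order $3$ were checked in Maple; the inductive step is then purely a dimension count against $H(k)$.
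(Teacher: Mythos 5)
Your skeleton is the same as the paper's: invoke the global Lie--Tresse theorem \cite{KL2} together with Theorem \ref{Algebraicity} for abstract existence of a separating set of rational invariants, observe that invariance of the projective map $\hat G$ makes the ratios $G_{ij}/G_{44}$ third-order rational invariants, check independence in Maple, and conclude generation and separation from the principle of $n$-invariants \cite{ALV}. That is precisely the (rather compressed) argument the paper gives in the paragraph preceding the theorem, so up to that point you and the paper agree.

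The added quantitative ``certification'' is where your argument breaks down. You assert that the nine ratios adjoined to $I_1,\dots,I_4$ ``exhaust the third-order invariants'' and that this ``matches $s_3=24$''; but $4+9=13$, whereas the table in Section \ref{S6} gives $s_3=\op{codim}\mathcal O_3=24$. The deficit persists at the next orders: since $\hat{\p}_{I_j}I_k=\delta_{jk}$, the only new derived invariants come from Tresse derivatives of the nine ratios, giving at most $13+4\cdot 9=49$ independent invariants of order $\le 4$ against $s_4=70$, and $139$ against $s_5=144$; only from order $6$ onward does the supply of derived invariants exceed the demand $H(k)$. Hence the inductive step you propose --- that ``the derived invariants match the known number $s_k$ at every order, leaving no room for missing generators'' --- is false as an order-by-order count and cannot be used to certify either generation or separation. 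The paper does not attempt such a count; it leans entirely on the $n$-invariants principle, namely that the conformal structure rewritten in the invariant coframe, $[g]=[\sum G_{ij}\,\hat dI_i\,\hat dI_j]$, is a complete invariant of a generic solution, so that two generic jets on which $\hat G$ and all its Tresse derivatives agree are equivalent; any numerical cross-check can only be asymptotic (the leading term $3k^2$ of $H(k)$ is consistent with the moduli description), not a level-by-level match of transcendence degrees. If you want to keep a counting argument, you must either enlarge the displayed generating set at orders $3$--$5$ or explain how the missing low-order invariants are recovered rationally from higher-order derived invariants; as written, that gap is not closed.
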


\subsection{The quotient equation}
When restricted to a section $g_0$ of $\mathcal{C}_M^\text{PR}$, the functions $G_{ij}$ can be considered as functions of $I_1,I_2,I_3,I_4$. Two such nonsingular sections are equivalent if they determine the same map $\hat G(I_1,I_2,I_3,I_4)$.

The quotient equation $(\mathcal{SDE}_\infty \setminus \Sigma_\infty)/\sym$ is given by
\[*W_{g} = W_g, \quad  \text{ where } \quad  g = \sum G_{ij}(I_1,I_2,I_3,I_4) \hat d I_i \hat d I_j .\]
%\begin{equation}
%  I_1(g_0)=t, \quad I_2(g_0)=x, \quad I_3(g_0)=y, \quad I_4(g_0)=z, \quad * W_{g_0}=W_{g_0}.
%\end{equation}

Here we consider $I_1,\dots,I_4$ as coordinates on $M$. Equivalently, given local coordinates $(x_1,\dots,x_4)$ on $M$ the quotient equation is obtained by adding to $\mathcal{SDE}$ the equations $I_i=x_i$, $1\le i\le4$.

%%%%%%%%%%%%%%%%%%%%%%%%%%%%%%%%%%%%%%%%%%%%%%%%%%%%%%%%%%%%%%%%%%%%%%%%%%


\begin{thebibliography}{11}
\footnotesize

\bibitem{ALV}
D. Alekseevskij, V. Lychagin, A. Vinogradov,
{\it Basic ideas and concepts of differential geometry\/},
Encyclopaedia Math. Sci. {\bf 28}, Geometry~1, Springer (1991).

\bibitem{B}
A. Besse, {\it Einstein manifolds}, Springer-Verlag, Berlin Heidelberg (1987).

\bibitem{Ch}
C. Chevalley, {\it Algebraic Lie algebras\/}, Ann. of Math. (2) {\bf 48}, 91--100 (1947).

\bibitem{CT}
C. Chevalley, H.-F. Tuan, {\it On algebraic Lie algebras\/},
Proc. Nat. Acad. Sci. U.S.A. {\bf 31}, 195--196 (1945).

\bibitem{DFK}
M. Dunajski, E.V. Ferapontov, B. Kruglikov,
{\it On the Einstein-Weyl and conformal self-duality equations}, Journ. Math. Phys. {\bf 56}, 083501 (2015).

\bibitem{G}
D.A. Grossman, {\it Torsion-free path geometries and integrable second order ODE systems},
Selecta Mathematica New Ser. {\bf 6}, 399-442 (2000).

\bibitem{Hi}
D. Hilbert, {Theory of algebraic invariants} (translated from the German original),
Cambridge University Press, Cambridge (1993).

\bibitem{KLV}
I. Krasilshchik, V. Lychagin, A. Vinogradov, {Geometry of jet spaces and nonlinear partial differential equations}, Gordon and Breach (1986).

\bibitem{K1}
B. Kruglikov, {\it Differential Invariants and Symmetry: Riemannian Metrics and Beyond},
Lobachevskii Journal of Mathematics {\bf 36}, no.3, 292-297 (2015).

\bibitem{K2}
B. Kruglikov, {\it Conformal Differential Invariants}, arXiv:1604.06559 (2016).

\bibitem{KL1}
B. Kruglikov, V. Lychagin, {\it Geometry of Differential equations\/},
Handbook of Global Analysis, Ed. D.Krupka, D.Saunders, Elsevier, 725-772 (2008).

\bibitem{KL2}
B. Kruglikov, V. Lychagin, {\it Global Lie-Tresse theorem}, Selecta Mathematica New Ser.
DOI 10.1007/s00029-015-0220-z (2016).

\bibitem{LY}
V. Lychagin, V. Yumaguzhin, {\it Invariants in Relativity Theory},
Lobachevskii Journal of Mathematics {\bf 36}, no.3, 298-312 (2015).

\bibitem{M}
D. Mumford, J. Fogarty, F. Kirwan, {\it Geometric invariant theory},
Ergebnisse der Mathematik und ihrer Grenzgebiete (2), {\bf 34}, Springer-Verlag, Berlin (1994).

%\bibitem{O}
%P. Olver, {\it Equivalence, Invariants, and Symmetry}, Cambridge University Press, Cambridge (1995).

\bibitem{P}
R. Penrose, {\it Techinques of differential topology in relativity}, SIAM (1972).

\bibitem{R}
M. Rosenlicht, {\it Some basic theorems on algebraic groups}, American Journal of Mathematics
{\bf 78}, 401-443 (1956).

%\bibitem{T}
%T.Y. Thomas, {\it The Differential Invariants of Generalized Spaces},
%Cambridge University Press, Cambridge (1934).

\end{thebibliography}
\end{document}